\date{}
\title{\vspace{-1cm} A new proof of the graph removal lemma}
\author{
Jacob Fox \thanks{Department of Mathematics,
Princeton University, Princeton, NJ 08544. E-mail:
jacobfox@math.princeton.edu.
Research supported by a Princeton Centennial Fellowship.}}
\newtheorem{theorem}{Theorem}
\newtheorem{lemma}{Lemma}
\begin{document}
\maketitle

\begin{abstract}
Let $H$ be a fixed graph with $h$ vertices. The graph removal lemma states that every graph on $n$ vertices with $o(n^h)$ copies of
$H$ can be made $H$-free by removing $o(n^2)$ edges. We give a new proof which avoids Szemer\'edi's regularity lemma and gives a better bound.
This approach also works to give improved bounds for the directed and multicolored analogues of the graph removal lemma. This answers questions of Alon and Gowers.
\end{abstract}

\section{Introduction}

Szemer\'edi's regularity lemma \cite{Sz} is one of the most powerful tools in graph theory.
It was introduced by Szemer\'edi in his proof \cite{Sz1} of the Erd\H{o}s-Tur\'an conjecture on
long arithmetic progressions in dense subsets of the integers. Roughly speaking, it says that every large graph
can be partitioned into a small number of parts such that the bipartite subgraph between almost every pair of parts is random-like.
This structure is useful for approximating the number of copies of some fixed subgraph.

To properly state the regularity lemma requires some terminology. The edge density $d(X,Y)$ between two subsets of vertices of a graph $G$ is the fraction of
pairs $(x,y) \in X \times Y$ that are edges of $G$. A pair $(X,Y)$ of vertex sets is called {\it $\epsilon$-regular} if for all $X' \subset X$ and
$Y' \subset Y$ with $|X'| \geq \epsilon |X|$ and $|Y'| \geq \epsilon |Y|$, we have $|d(X',Y')-d(X,Y)|<\epsilon$.
A partition $V = V_1 \cup \ldots \cup V_k$ is called equitable if $||V_i|-|V_j|| \leq 1$ for all $i$ and $j$.
The regularity lemma states that for each $\epsilon > 0$, there is a positive integer $M(\epsilon)$ such that the vertices of any graph $G$
can be equitably partitioned $V(G) = V_1 \cup \ldots \cup V_k$ into $k \leq M(\epsilon)$ parts where all but at most $\epsilon k^2$
of the pairs $(V_i,V_j)$ are $\epsilon$-regular. For more background on the regularity lemma, see the excellent survey by Koml\'os and Simonovits \cite{KoSi}.

In the regularity lemma, $M(\epsilon)$ can be taken to be a tower of twos of height proportional to $\epsilon^{-5}$. On
the other hand, Gowers \cite{Go} proved a lower bound on $M(\epsilon)$ which is a tower of twos of height proportional
to $\epsilon^{-1/16}$, thus demonstrating that $M(\epsilon)$ is inherently large as a function of $\epsilon^{-1}$. Unfortunately,
this implies that the bounds obtained by applications of the regularity lemma are usually quite poor. It remains an important
problem to determine if new proofs giving better quantitative estimates for certain applications of the regularity lemma exist (see, e.g., \cite{Go2}).
One such improvement is the proof of Gowers \cite{Go3} of Szemer\'edi's theorem using Fourier analysis.

The triangle removal lemma of Ruzsa and Szemer\'edi \cite{RuSz} is one of the most influential applications of Szemer\'edi's regularity lemma. It states
that any graph on $n$ vertices with $o(n^3)$ triangles can be made triangle-free by removing $o(n^2)$ edges. It easily implies Roth's theorem \cite{Ro}
on $3$-term arithmetic progressions in dense sets of integers. Furthermore, Solymosi \cite{So} gave an elegant proof that the triangle removal lemma further implies the
stronger corners theorem of Ajtai and Szem\'eredi \cite{AjSz}, which states that any dense subset of the integer grid contains the
vertices of an axis-aligned isosceles triangle.

The triangle removal lemma was extended by Erd\H{o}s, Frankl, and R\"odl \cite{EFR} to the graph removal lemma.
It says that for each $\epsilon>0$ and graph $H$ on $h$ vertices there is $\delta=\delta(\epsilon,H)>0$
such that every graph on $n$ vertices with at most $\delta n^{h}$ copies of $H$ can be made $H$-free by removing
at most $\epsilon n^2$ edges.  The graph removal lemma has many applications in graph theory, additive combinatorics, discrete geometry, and theoretical computer science.

One well-known application of the graph removal lemma is in property testing. This is an active area of computer science where one wishes to quickly distinguish between objects that satisfy a property from objects
that are far from satisfying that property. The study of this notion was initiated by Rubinfield and Sudan \cite{RuSu}, and subsequently Goldreich, Goldwasser, and Ron \cite{GGR} started the investigation of property
testers for combinatorial objects. One simple consequence of the graph removal lemma is a {\it constant} time algorithm for subgraph testing with one-sided error
(see \cite{Al} and its references). A graph on $n$ vertices is {\it $\epsilon$-far from being $H$-free} if at least $\epsilon n^2$ edges need
to be removed to make it $H$-free. The graph removal lemma implies that there is an algorithm which
runs in time $O_{\epsilon}(1)$ which accepts all $H$-free graphs, and rejects any graph which is $\epsilon$-far from being $H$-free
with probability at least $2/3$. The algorithm samples $t=2\delta^{-1}$ $h$-tuples of vertices uniformly at random, where
$\delta$ is picked according to the graph removal lemma, and accepts if none of them form a copy of $H$, and otherwise rejects. Any $H$-free graph
is clearly accepted. If a graph is $\epsilon$-far from being $H$-free, then it contains at least $\delta n^h$ copies of $H$,
and the probability that none of the sampled $h$-tuples forms a copy of $H$ is at most $(1-\delta)^t < 1/3$. Notice that the running time as a function of $\epsilon$ depends on the bound in the graph removal lemma.

Ruzsa and Szemer\'edi \cite{RuSz} derived the triangle removal lemma in the course of settling an extremal hypergraph problem asked by
Brown, Erd\H{o}s, and S\'os \cite{BES}. Let $g_r(n,v,e)$ be the maximum number of edges an $r$-uniform hypergraph
may have if the union of any $e$ edges span more than $v$ vertices. Ruzsa and Szemer\'edi \cite{RuSz} use the triangle removal lemma to settle the $(6,3)$-problem,
which states that $g_3(n,6,3)=o(n^2)$. Equivalently, any triple system on $n$ vertices not containing $6$ vertices with $3$ or more triples has $o(n^2)$ triples.
This was generalized by Erd\H{o}s, Frankl, and R\"odl \cite{EFR} using the graph removal lemma to establish $g_r(n,3r-3,3)=o(n^2)$.

For most of the applications of the graph removal lemma in number theory, new proofs using Fourier analysis were discovered which give better bounds (see, e.g.,
\cite{Go3}, \cite{Sh}). However, for the applications which are more combinatorial, no such methods exist. The only known proof of the graph removal lemma
used the regularity lemma, leading to weak bounds for the graph removal lemma and its applications.
Hence, finding a proof which yields better bounds by avoiding the regularity lemma is a problem of considerable interest and has been reiterated by
several authors, including Erd\H{o}s \cite{Er}, Alon \cite{Al},  Gowers \cite{Go1}, and Tao \cite{Ta1}.

Our main result is a new proof of the graph removal lemma which avoids using the regularity lemma and gives a better bound.

\begin{theorem}\label{main}
For each graph $H$ on $h$ vertices, if $\delta^{-1}$ is a tower of twos of height $5h^4\log \epsilon^{-1}$,
then every graph $G$ on $n$ vertices with at most $\delta n^h$ copies of $H$ can be made $H$-free by removing $\epsilon n^2$ edges.
\end{theorem}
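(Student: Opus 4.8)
The plan is to avoid the regularity lemma entirely and instead build, greedily and directly, a partition of $V(G)$ that is ``regular enough'' for the purpose of counting copies of $H$, while keeping the number of parts only a tower of bounded height. The starting point is the observation that for the removal lemma we do not need every pair of parts to be regular; we only need to be able to argue that if $G$ has few copies of $H$, then the union of the ``low-density and irregular'' bipartite pieces already destroys all copies of $H$, so that deleting all their edges makes $G$ free of $H$. The key device replacing the energy-increment argument is a \emph{random partition} or \emph{sampling} argument: rather than iterating the defect form of Cauchy--Schwarz $\mathrm{poly}(\epsilon^{-1})$ times (which forces the tower height to be polynomial in $\epsilon^{-1}$), one refines only $O(h^4 \log \epsilon^{-1})$ times, at each step choosing a small random subset of vertices and partitioning according to the neighborhoods in that subset. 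This is the mechanism that converts a $\log\epsilon^{-1}$ number of refinement steps into a tower of twos of height $O(h^4\log\epsilon^{-1})$, rather than the tower of height $\mathrm{poly}(\epsilon^{-1})$ coming from the regularity lemma.

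The steps, in order, are as follows. First I would fix a threshold parameter and run the following refinement process for $T = O(h^4 \log\epsilon^{-1})$ rounds: maintain a partition $\mathcal{P}$ of $V(G)$; in each round, for each pair of parts that is not yet ``$\eta$-regular'' in an appropriate one-sided sense, sample a bounded number of vertices and subdivide each part according to adjacency to the sampled vertices. One shows that after $T$ rounds, the total ``defect'' (measured by a bounded, monotone energy-type quantity taking values in $[0,1]$, which increases by a fixed amount $\Omega(1)$ each time a genuinely irregular pair is split) must have stopped increasing, so the final partition $\mathcal{P}$ has the property that all but a small fraction of pairs of parts are regular. Since each round multiplies the number of parts by a bounded factor, $|\mathcal{P}|$ is a tower of height $O(T) = O(h^4\log\epsilon^{-1})$. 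Second, I would clean the graph $G$: delete all edges inside parts, all edges between pairs of parts of density below $\epsilon/(2\binom{h}{2}\cdot\text{something})$, and all edges between irregular pairs; a counting argument shows this removes at most $\epsilon n^2$ edges in total. Third — the counting lemma — I would show that if the cleaned graph $G'$ still contains a copy of $H$, then by the regularity of the surviving pairs and a standard embedding/counting argument, $G'$, and hence $G$, contains at least $\delta n^h$ labeled copies of $H$, where $\delta^{-1}$ is controlled by the number of parts, i.e. a tower of height $O(h^4\log\epsilon^{-1})$. Contrapositively, if $G$ has at most $\delta n^h$ copies of $H$, then $G'$ is $H$-free, and $G'$ was obtained from $G$ by deleting at most $\epsilon n^2$ edges.

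The main obstacle, and the heart of the argument, is the first step: making the random-sampling refinement actually terminate after only $O(h^4\log\epsilon^{-1})$ rounds while still guaranteeing that the final partition is regular enough for the counting lemma. One has to choose the sampling size and the regularity parameter $\eta$ of the intermediate partition very carefully — $\eta$ must be small enough (roughly $\epsilon^{h}$-ish, which is where the extra powers of $h$ in the exponent $5h^4$ come from) that the counting lemma still yields $\delta n^h$ copies, yet the energy increment per irregular split must stay bounded below independently of $\eta$ so that the number of rounds stays $O(\log\epsilon^{-1})$ rather than $\mathrm{poly}(\epsilon^{-1})$. Resolving this tension — decoupling the \emph{precision} needed for counting from the \emph{number of iterations} needed to reach that precision — is exactly what the regularity lemma fails to do, and getting a clean quantitative handle on it (including the union bounds over all pairs when the random samples are chosen, and the error analysis in the counting lemma for pairs that are only mildly irregular) is where essentially all the work lies. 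A secondary technical point is handling the equitability/bookkeeping so that parts do not become too small relative to $n$, which would break the density and counting estimates; this is routine but needs to be tracked through all $T$ rounds.
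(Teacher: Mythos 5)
Your proposal does not contain the key ideas needed to make the argument work, and as written there is a genuine gap at exactly the point you identify as ``where essentially all the work lies.''

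You propose a random-sampling refinement and claim that the energy increment per irregular split can be made $\Omega(1)$ ``independently of $\eta$,'' so that a bounded energy in $[0,1]$ forces termination after $O(\log\epsilon^{-1})$ rounds. But this is precisely the obstacle you cannot wave away: with the usual mean-square density functional, each irregular pair contributes only $\Omega(\eta^4)$ or so to the energy, and sampling vertices and partitioning by their neighborhoods does not change that. Random sampling gives Frieze--Kannan \emph{weak} regularity with small partitions, but weak regularity is not strong enough to support a counting lemma for a fixed $h$-vertex $H$; to get counting-grade regularity you are back to the full Szemer\'edi increment, which is $\mathrm{poly}(\epsilon^{-1})$ rounds. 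You state the needed decoupling as a goal but offer no mechanism for achieving it, so the central step of the argument is missing.

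The paper's route is genuinely different on both of the two points where your proposal breaks. First, it replaces the mean-square density $\sum p_ip_j d_{ij}^2$ by the \emph{mean entropy density} $\sum p_ip_j\, d_{ij}\log d_{ij}$, which takes values in the interval $[d\log d, 0]$ rather than $[0,1]$; this makes the ``budget'' $O(d\log d^{-1})$ rather than $O(1)$, and the number of steps the ratio of budget to increment, which is $O(\log d^{-1})=O(\log\epsilon^{-1})$ once the increment is $\Omega(d)$. Second, to get an increment of $\Omega(d)$ per step (rather than a power of $\epsilon$), the paper proves a \emph{shattering lemma}: if $V_1,\dots,V_h$ have very few copies of $H$, then some edge $(i,j)$ of $H$ admits partitions of $V_i$ and $V_j$ into a controlled number of blocks so that a constant fraction $h^{-2}$ (independent of $\epsilon$) of the block-pairs has density below $\alpha\approx\epsilon$. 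A Jensen defect inequality for $x\log x$ then converts this constant-fraction shattering into the $\Omega(d)$ entropy increment. Third, the overall structure is also different from what you sketch: the paper argues by contradiction, first passing to a subgraph $G'$ that is a union of $\epsilon n^2$ edge-disjoint copies of $H$, and never invokes a global counting lemma for $H$ in a regular partition; the shattering lemma plays the role your ``counting lemma'' would play, but in a localized, edge-disjoint-copy way. None of these three ingredients (the entropy functional, the constant-fraction shattering, or the edge-disjoint reduction) appears in your proposal, and without at least the first two, the claimed $O(\log\epsilon^{-1})$ iteration count is unjustified.
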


For comparison, the regularity proof necessarily gives a bound on $\delta^{-1}$ that is a tower of twos of height polynomial in $\epsilon^{-1}$.

We next sketch the proof idea of the regularity lemma and our proof of the graph removal lemma. At each stage
of the proof of the regularity lemma, we have a partition $V(G)=V_1 \cup \ldots \cup V_k$ of the vertex set of a graph $G$ on $n$ vertices into parts
which differ in cardinality by at most $1$. Let $p_i=|V_i|/n$. The {\it mean square density} with respect to the partition is
$\sum_{1 \leq i,j \leq k}p_ip_j d(V_i,V_j)^2$. A {\it refinement} of a partition $\mathcal{P}$
of a set $V$ is another partition $\mathcal{Q}$ of $V$ such that each member of $\mathcal{Q}$ is a subset of some member of $\mathcal{P}$.
If the partition does not satisfy the conclusion of the regularity lemma, then using the Cauchy-Schwarz defect inequality,
the partition can be refined such that the mean square density increases by $\Omega(\epsilon^5)$ while the number of parts is at most exponential in $k$.
This process must stop after $O(\epsilon^{-5})$ steps as the mean square density cannot be more than $1$.
We thus get a bound on $M(\epsilon)$ which is a tower of twos of height $O(\epsilon^{-5})$.

Now we sketch the proof of Theorem \ref{main}. Let $H$ be a fixed graph with $h$ vertices. We suppose for contradiction that $G=(V,E)$
is a graph on $n$ vertices for which $\epsilon n^2$ edges need to be removed to make it $H$-free and yet $G$ contains less than
$\delta n^h$ copies of $H$. We pass to a subgraph $G'$ of $G$ consisting of the union of a maximum collection of edge-disjoint copies of $H$ in $G$. As the
removal of the edges of $G'$ leaves an $H$-free subgraph of $G$, the graph $G'$ has at least $\epsilon n^2$ edges. Let $d=2e(G')/n^2 \geq 2\epsilon$.
At each stage of our proof, we have a partition $V=V_1 \cup \ldots \cup V_k$ of the vertex set into parts
such that almost all vertices are in parts of the same size. Let $p_i=|V_i|/n$. The {\it mean entropy density} with respect to
the partition is $\sum_{1 \leq i,j \leq k}p_ip_j f(d(V_i,V_j))$ where $f(x)=x\log x$ for $0 < x \leq 1$ and $f(0)=0$.
A convexity argument shows that the mean entropy density with respect to any partition of $V$ is at least $d\log d$.
The fact that $f(x)$ is nonpositive for $0 \leq x \leq 1$ implies that the mean entropy density is always nonpositive.
We prove a key lemma which shows how to ``shatter'' sets with few copies of $H$, and a Jensen defect inequality for such a shattering.
These lemmas enable us to show that we can refine the partition such that the mean entropy density increases by $\Omega(d)$ while the number of
parts only goes up exponentially in $c(\epsilon,h)k$, where $c(\epsilon,h)=2^{\left(h / \epsilon\right)^{O(h^2)}}$. So essentially in each iteration the number of parts
is one exponential larger. This process must stop after $O(\log d^{-1})=O(\log \epsilon^{-1})$ steps as the mean entropy density is at least $d\log d$,
increases $\Omega(d)$ at each refinement, and is always nonpositive. We thus get a bound on $\delta^{-1}$ in the graph removal lemma which is a tower
of twos of height $O(\log \epsilon^{-1})$.

In the next section, we prove a key lemma showing how to ``shatter'' sets with few copies of $H$ between them. In Section \ref{defectsect}, we prove a Jensen defect
inequality. We use these lemmas in Section \ref{section4} to prove Theorem \ref{main}. In the concluding remarks, we discuss several variants of the graph removal
lemma for which we obtain similar improved bounds, and some open problems. We do not make any serious attempt to optimize absolute constants
in our statements and proofs. All logarithms are assumed to be base $e$.

\section{Key Lemma}

The purpose of this section is to prove a key lemma, Lemma \ref{keylemma}, for the proof of Theorem \ref{main}.
Let $H$ be a labeled graph with vertex set $[h]:=\{1,\ldots,h\}$. Lemma \ref{keylemma} shows that if $V_1,\ldots,V_h$ are vertex subsets of a graph such that there are few copies of $H$ with
the copy of vertex $i$ in $V_i$ for $i \in [h]$, then there is an edge $(i,j)$ of $H$ such that the pair $(V_i,V_j)$ can be shattered in the following sense.
An {\it $(\alpha,c,t)$-shattering} of a pair $(A,B)$ of vertex subsets in a graph $G$ is a pair of partitions
$A=A_1 \cup \ldots \cup A_r$  and $B=B_1 \cup \ldots \cup B_s$ such that $r,s \leq t$ and
the sum of $|A_i||B_j|$ over all pairs $(A_i,B_j)$ with $d(A_i,B_j)<\alpha$ is at least $c|A||B|$.
Note that if $\alpha' \geq \alpha$, $c' \leq c$, and $t' \geq t$, then an $(\alpha,c,t)$-shattering for a pair $(A,B)$
is also an $(\alpha',c',t')$-shattering for $(A,B)$. Before proving the key lemma, we first establish some
auxiliary results on $\epsilon$-regular tuples in uniform hypergraphs.

\subsection{Regular tuples in hypergraphs}

A {\it hypergraph} $\Gamma=(V,E)$ consists of a set $V$ of vertices and a set $E$ of edges, which are subsets of $V$.
A hypergraph is {\it $k$-uniform} if every edge contains precisely $k$ vertices. A $k$-uniform hypergraph $\Gamma=(V,E)$ is {\it $k$-partite}
if there is a partition $V=V_1 \cup \ldots \cup V_k$ such that every edge of $\Gamma$ contains exactly one vertex from each $V_i$. In a hypergraph $\Gamma$,
for vertex subsets $V_1,\ldots,V_k$, let $e(V_1,\ldots,V_k)$ denote the number of $k$-tuples in $V_1 \times \cdots \times V_k$ which are edges of $\Gamma$,
and let $d(V_1,\ldots,V_k)=\frac{e(V_1,\ldots,V_k)}{|V_1|\cdots |V_k|}$, which is the fraction of $k$-tuples in $V_1 \times \cdots \times V_k$
which are edges of $H$.

We begin with a simple lemma which follows by an averaging argument.

\begin{lemma}\label{averaging}
Let $\Gamma$ be a $k$-uniform hypergraph and $A_1,\ldots,A_k$ be nonempty vertex subsets. If $1 \leq a_i \leq |A_i|$ for $i \in [k]$, then
there are subsets $B_i,C_i \subset A_i$ each of cardinality $a_i$ such that $d(B_1,\ldots,B_k) \geq d(A_1,\ldots,A_k) \geq d(C_1,\ldots,C_k)$.
\end{lemma}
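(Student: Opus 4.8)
The plan is to prove Lemma~\ref{averaging} by an averaging (probabilistic) argument on random subsets of prescribed size. First I would handle the left-hand inequality $d(B_1,\ldots,B_k) \geq d(A_1,\ldots,A_k)$; the right-hand inequality $d(A_1,\ldots,A_k) \geq d(C_1,\ldots,C_k)$ follows by an entirely symmetric argument (replacing ``maximum'' by ``minimum'' below), so I would only write one of them in detail.

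For the left-hand inequality, choose, for each $i \in [k]$ independently and uniformly at random, a subset $B_i \subseteq A_i$ with $|B_i| = a_i$. I would then compute the expected edge count $\mathbb{E}\bigl[e(B_1,\ldots,B_k)\bigr]$. By linearity of expectation, this equals $\sum_{(v_1,\ldots,v_k)} \pr[v_1 \in B_1, \ldots, v_k \in B_k]$, where the sum is over all edges $(v_1,\ldots,v_k) \in A_1 \times \cdots \times A_k$ of $\Gamma$. Since the choices are independent across coordinates and each $v_i$ lies in a uniformly random $a_i$-subset of $A_i$, the probability factors as $\prod_{i=1}^k \frac{a_i}{|A_i|}$. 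Hence
\[
\mathbb{E}\bigl[e(B_1,\ldots,B_k)\bigr] = e(A_1,\ldots,A_k) \prod_{i=1}^k \frac{a_i}{|A_i|} = d(A_1,\ldots,A_k) \prod_{i=1}^k a_i.
\]
Dividing by $\prod_i a_i = \prod_i |B_i|$, the expected density $\mathbb{E}\bigl[d(B_1,\ldots,B_k)\bigr]$ equals $d(A_1,\ldots,A_k)$. Since a random variable must sometimes be at least its mean, there is a choice of the $B_i$'s with $d(B_1,\ldots,B_k) \geq d(A_1,\ldots,A_k)$, as desired.

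Symmetrically, applying the same first-moment computation and the observation that a random variable is sometimes at most its mean yields sets $C_i \subseteq A_i$ with $|C_i| = a_i$ and $d(C_1,\ldots,C_k) \leq d(A_1,\ldots,A_k)$. Combining the two gives the claimed chain of inequalities. I do not anticipate a genuine obstacle here: the only mild care needed is to note that the $B_i$'s and $C_i$'s are chosen by separate applications of the argument (so there is no conflict between ``$\geq$'' for one family and ``$\leq$'' for the other), and that nonemptiness of the $A_i$ together with $a_i \geq 1$ ensures all the sets and densities involved are well defined.
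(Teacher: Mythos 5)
Your proof is correct and is the same averaging argument as the paper's: choose each $B_i$ (resp. $C_i$) uniformly at random among $a_i$-subsets of $A_i$, note that the expected density equals $d(A_1,\ldots,A_k)$, and conclude by the first-moment method. You simply spell out the expectation computation that the paper leaves implicit.
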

\begin{proof}
By averaging, the expected value of $d(X_1,\ldots,X_k)$ with $X_i \subset A_i$ chosen uniformly at random with $|X_i|=a_i$ is
$d(A_1,\ldots,A_k)$. Hence, there are choices of $B_i,C_i \subset A_i$ for each $i \in [k]$ satisfying the desired properties.
\end{proof}

In a $k$-uniform hypergraph $\Gamma$, a $k$-tuple $(V_1,\ldots,V_k)$ of vertex subsets is {\it $(\alpha,\beta)$-superregular}
if $d(U_1,\ldots,U_k) \geq \beta$ holds for all $k$-tuples $(U_1,\ldots,U_k)$ with $|U_i| \geq \alpha |V_i|$ for $i \in [k]$.

\begin{lemma}\label{presuperregular}
Suppose $\Gamma$ is a $k$-uniform hypergraph and $A_1,\ldots,A_k$ are vertex subsets each of cardinality $n$ with $d=d(A_1,\ldots,A_k)$.
If $0<\alpha,\beta<1/4$ are such that $d \geq 2\beta$ and $(A_1,\ldots,A_k)$ is not $(\alpha,\beta)$-superregular,
then there are $B_i \subset A_i$ for $i \in [k]$ with $|B_1|=\ldots=|B_k| \geq \alpha n$ and $d(B_1,\ldots,B_k) \geq (1+\frac{\alpha^k}{2})d$.
\end{lemma}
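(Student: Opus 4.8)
The plan is to use the failure of $(\alpha,\beta)$-superregularity to extract a "witness" sub-box on which the density is at most $\beta$, and then show that removing this box from $(A_1,\ldots,A_k)$ and passing to a slightly smaller equicardinal sub-box forces the density up by the claimed multiplicative factor. First I would apply the definition: since $(A_1,\ldots,A_k)$ is not $(\alpha,\beta)$-superregular, there exist $U_i \subset A_i$ with $|U_i| \geq \alpha n$ and $d(U_1,\ldots,U_k) < \beta$. Using Lemma \ref{averaging} I can shrink each $U_i$ to a set $U_i'$ of size exactly $\lceil \alpha n \rceil$ (or some common convenient size $m \geq \alpha n$) with density still below $\beta$; in fact Lemma \ref{averaging} lets me simultaneously enlarge the complementary behaviour, but the key point is to get a clean low-density box $U_1' \times \cdots \times U_k'$ of controlled dimensions sitting inside $A_1 \times \cdots \times A_k$.

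The main computation is a counting/averaging identity. Write $A_i = U_i' \cup W_i$ where $W_i = A_i \setminus U_i'$, so $|W_i| = n - m$ with $m = |U_i'| \in [\alpha n, n)$ (I may need to be slightly careful to take $m$ not too close to $n$, but since $\alpha < 1/4$ and the box is not all of $A$, this is fine; if $m$ were forced large I would instead shrink via Lemma \ref{averaging} to $m = \lceil \alpha n\rceil$). Expanding $e(A_1,\ldots,A_k) = \sum e(S_1,\ldots,S_k)$ over the $2^k$ choices $S_i \in \{U_i', W_i\}$, the term with all $S_i = U_i'$ contributes at most $\beta m^k$. Hence the sum of the remaining $2^k - 1$ terms is at least $d n^k - \beta m^k$. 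By averaging over these $2^k-1$ boxes, at least one box $(S_1,\ldots,S_k)$ with not all coordinates equal to $U_i'$ has density at least $\frac{d n^k - \beta m^k}{(2^k-1)\,\prod|S_i|}$. The weakest such box has all-but-one coordinate of size $n$ and one of size $n-m$, or various mixtures, so $\prod |S_i| \leq n^{k-1}(n-m) \le n^k$; more usefully I want the box whose coordinate sizes I can then trim down to a common value $\geq \alpha n$.

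Actually the cleaner route, and the one I would push on, is: since $(A_1,\ldots,A_k)$ is \emph{not} $(\alpha,\beta)$-superregular but we want superregular-type conclusions, consider instead just the single split in \emph{one} coordinate. This is where I expect the real obstacle: getting the exponent $\alpha^k$ (rather than something like $\alpha^k/2^k$ or worse) seems to require splitting all $k$ coordinates at once and then carefully choosing which of the $2^k-1$ "off" boxes to keep, followed by an application of Lemma \ref{averaging} to equalize its side lengths down to a common value $\ge \alpha n$ while not decreasing its density. Concretely: from the box of density $\ge \frac{dn^k - \beta m^k}{(2^k-1)|S_1|\cdots|S_k|}$, using $\beta \le d/2$ and $m \le n$ gives numerator $\ge dn^k - (d/2)n^k = (d/2)n^k$, hence density $\ge \frac{(d/2)n^k}{(2^k-1)n^k}$ — but that is only $\ge d/2^{k+1}$, which is \emph{not} of the form $(1+\tfrac{\alpha^k}{2})d$. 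So the crude expansion is too lossy, and the right argument must exploit that the low-density box $U'$ has \emph{small} volume $m^k \le (\text{something}) \cdot n^k$ only when $m$ is close to $\alpha n$; the gain $(1+\alpha^k/2)$ should come from comparing $e$ on $A_1\times\cdots\times A_k$ with $e$ on $(A_1\setminus U_1')\times\cdots\times(A_k\setminus U_k')$, a box of side $n-m$ with $n - m \ge (1-1)n$...

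Let me restate the plan correctly. I would set $m = \lceil \alpha n \rceil$ (forcing it via Lemma \ref{averaging}), so $U_i'$ has size exactly $m$ and $d(U_1',\ldots,U_k') < \beta$. Put $B_i = A_i \setminus U_i'$, so $|B_i| = n - m$ and these are equicardinal with $|B_i| = n-m \ge (1-\alpha-\tfrac1n)n \ge \alpha n$ since $\alpha < 1/4$. Now
\[
e(A_1,\ldots,A_k) \;=\; \sum_{\emptyset \ne T \subseteq [k]} e\big(\{S_i^T\}_{i\in[k]}\big) \;+\; e(U_1',\ldots,U_k'),
\]
where $S_i^T = U_i'$ if $i\notin T$ and $S_i^T = B_i$ if $i\in T$; the $T=[k]$ term is $e(B_1,\ldots,B_k)$. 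Every term with $\emptyset \ne T \subsetneq [k]$ is at most $n^k - $ ... bounded by $m^{k-|T|}(n-m)^{|T|} \le m^{k-1}n$ (crudely), and $e(U_1',\ldots,U_k') < \beta m^k$. So
\[
e(B_1,\ldots,B_k) \;\ge\; dn^k - \beta m^k - (2^k - 2)\, m^{k-1} n.
\]
Dividing by $(n-m)^k$ gives $d(B_1,\ldots,B_k) \ge \frac{dn^k - \beta m^k - (2^k-2)m^{k-1}n}{(n-m)^k}$, and now I use $m = \lceil\alpha n\rceil \approx \alpha n$, $\beta \le d/2$, and $(n-m)^k = n^k(1-\alpha)^k \le n^k(1 - \alpha^k)$-ish, to check this exceeds $(1+\tfrac{\alpha^k}{2})d$. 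The main obstacle is exactly this final inequality: the error term $(2^k-2)m^{k-1}n \approx 2^k \alpha^{k-1} n^k$ is \emph{larger} than the gain $\alpha^k d n^k$ when $d$ is small, so the crude bound on the mixed terms $T \subsetneq [k]$ is insufficient. The fix must be to \emph{not} expand those terms at all but keep them: $e(B_1,\ldots,B_k) = e(A_1,\ldots,A_k) - [\,e(A_1,\ldots,A_k) - e(B_1,\ldots,B_k)\,]$, and the bracketed quantity counts edges meeting at least one $U_i'$, which is at most $\sum_{i=1}^k e(A_1,\ldots,A_{i-1},U_i',A_{i+1},\ldots,A_k)$. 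I would bound each such term by $d_i \cdot m n^{k-1}$ where $d_i$ is the density of that particular "one coordinate replaced by $U_i'$" box; if all these $d_i$ were $< d$ we would immediately get $e(B_1,\ldots,B_k) > dn^k - kd\,mn^{k-1} = dn^k(1 - k\alpha)$, still not quite enough. So genuinely the only way to get a \emph{multiplicative increase} is to observe: since the average density over all of $A$ is $d$ and one sub-box $U'$ has density $<\beta \le d/2$, the complementary portion must have density strictly above $d$ — and quantifying "strictly above" by $(1+\alpha^k/2)d$ uses that $U'$ occupies a $\ge \alpha^k$ fraction of the volume. That is the clean statement: $dn^k = e(U') + e(A\setminus\text{box containing }U') \le \beta m^k + d(\text{rest})(n^k - m^k)$, wait — $A_1\times\cdots\times A_k$ minus $U_1'\times\cdots\times U_k'$ is not a box. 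I would therefore partition $A_1\times\cdots\times A_k$ into the $2^k$ product boxes $\prod S_i^T$, note $\prod_T |S_i^T|$-volumes, and conclude by averaging that \emph{some} box $T \ne \emptyset$ has density $\ge \frac{dn^k - \beta m^k}{n^k - m^k} \ge \frac{d - \beta \alpha^k}{1 - \alpha^k} \ge \frac{d - (d/2)\alpha^k}{1-\alpha^k} \ge d\cdot\frac{1 - \alpha^k/2}{1-\alpha^k} \ge d(1 + \tfrac{\alpha^k}{2})$, the last step since $\frac{1-x/2}{1-x} \ge 1 + x/2$ for small $x>0$. Finally that winning box $\prod_{i} S_i^T$ has each side either $m \ge \alpha n$ or $n - m \ge \alpha n$, so by Lemma \ref{averaging} I can shrink all sides down to the common value $\lceil \alpha n\rceil$ without decreasing the density, yielding the desired $B_1,\ldots,B_k$. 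The one genuinely delicate point, which I would isolate as the crux, is justifying $\frac{dn^k - \beta m^k}{n^k - m^k} \ge d(1+\alpha^k/2)$ cleanly — it needs $m^k \ge \alpha^k n^k$ (true since $m = \lceil \alpha n\rceil \ge \alpha n$) together with $\beta \le d/2$, and then elementary manipulation — and making sure the union of the $2^k - 1$ non-$U'$ boxes has total volume exactly $n^k - m^k$ so the averaging is valid.
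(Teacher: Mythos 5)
Your final paragraph lands on exactly the paper's argument: split each $A_i$ into $U_i'$ of size $\lceil \alpha n\rceil$ (via Lemma \ref{averaging}) and its complement, decompose $A_1\times\cdots\times A_k$ into the $2^k$ product boxes, average the edge count over the $2^k-1$ boxes other than $\prod_i U_i'$ (total volume $n^k - m^k$) to find one of density at least $(1+\tfrac{\alpha^k}{2})d$, and then equalize its side lengths down to $\lceil\alpha n\rceil$ by Lemma \ref{averaging}. The earlier detours in your write-up are correctly diagnosed as dead ends, so while the exposition meanders, the argument you settle on is sound and essentially identical to the one in the paper.
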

\begin{proof}
Since $(A_1,\ldots,A_k)$ is not $(\alpha,\beta)$-superregular, there are subsets $A_{i,1} \subset A_i$ such that $|A_{i,1}| \geq \alpha|A_i|$ and
$d(A_{1,1},\ldots,A_{k,1}) < \beta$. By Lemma \ref{averaging}, we may suppose that $|A_{i,1}|=\lceil \alpha n \rceil$ for $i \in [k]$. Let $A_{i,2}=A_i \setminus A_{i,1}$,
so $|A_{i,j}| \geq \alpha n$ for $i \in [k]$ and $j \in \{1,2\}$.

Summing over all $(j_1,\ldots,j_k) \in \{1,2\}^k$ with $(j_1,\ldots,j_k) \not = (1,\ldots,1)$, we have
$$\sum |A_{1,j_1}|\cdots |A_{k,j_k}| = |A_1| \cdots |A_k|-|A_{1,1}| \cdots |A_{k,1}|$$
and
\begin{eqnarray*} \sum d(A_{1,j_1},\ldots,A_{k,j_k})|A_{1,j_1}|\cdots |A_{k,j_k}| & = &\sum e(A_{1,j_1},\ldots,A_{k,j_k}) =
e(A_1,\ldots,A_k)-e(A_{1,1},\ldots,A_{k,1})\\ & = & d(A_1,\ldots,A_k)|A_1|\cdots|A_k|-d(A_{1,1},\ldots,A_{k,1})|A_{1,1}|\cdots |A_{k,1}| \\ & > &
d|A_1| \cdots |A_k|-\beta|A_{1,1}| \cdots |A_{k,1}|.\end{eqnarray*}
By averaging, there is $(j_1,\ldots,j_k) \in \{1,2\}^k$ with $(j_1,\ldots,j_k) \not = (1,\ldots,1)$ such that
\begin{eqnarray*} d(A_{1,j_1},\ldots,A_{k,j_k}) & > & \frac{d|A_1| \cdots |A_k|-\beta|A_{1,1}| \cdots |A_{k,1}|}{|A_1| \cdots |A_k|-|A_{1,1}| \cdots |A_{k,1}|}=d+(d-\beta)c/(1-c)
\geq d+(d-\beta)\alpha^k \\ & \geq & d\left(1+\frac{\alpha^k}{2}\right),\end{eqnarray*}
where $c=\frac{|A_{1,1}| \cdots |A_{k,1}|}{|A_1| \cdots |A_k|} \geq \alpha^k$. By Lemma \ref{averaging}, for each $i \in [k]$
there is a subset $B_i$ of $A_{i,j_i}$ of cardinality $\lceil \alpha n \rceil$ such that $d(B_1,\ldots,B_k) \geq d(1+\frac{\alpha^k}{2})$.
\end{proof}

The following lemma is a straightforward generalization of a result of Koml\'os that dense graphs contain large superregular pairs.

\begin{lemma}\label{superregular}
Suppose $\Gamma$ is a $k$-uniform hypergraph, and $A_1,\ldots,A_k$ are disjoint vertex subsets each of cardinality $n$.
If $0<\alpha,\beta<1/4$ are such that $d(A_1,\ldots,A_k) \geq 2\beta$, then there are subsets $V_i \subset A_i$ for $i \in [k]$
with $|V_1|=\ldots=|V_k| \geq \alpha^{3\alpha^{-k}\log \beta^{-1}}n$ for which $(V_1,\ldots,V_k)$ is $(\alpha,\beta)$-superregular.
\end{lemma}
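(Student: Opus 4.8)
The plan is to iterate Lemma~\ref{presuperregular}. Set $(A_1^{(0)},\ldots,A_k^{(0)}):=(A_1,\ldots,A_k)$, and suppose inductively that we have produced $A_i^{(t)}\subset A_i$ with a common cardinality $n_t$ and density $d^{(t)}:=d(A_1^{(t)},\ldots,A_k^{(t)})\ge 2\beta$. If $(A_1^{(t)},\ldots,A_k^{(t)})$ is $(\alpha,\beta)$-superregular we stop and take $V_i:=A_i^{(t)}$; otherwise Lemma~\ref{presuperregular}, applicable since $0<\alpha,\beta<1/4$ and $d^{(t)}\ge 2\beta$, yields $A_i^{(t+1)}\subset A_i^{(t)}$ of common cardinality $n_{t+1}\ge\alpha n_t$ with $d^{(t+1)}\ge\bigl(1+\tfrac{\alpha^k}{2}\bigr)d^{(t)}\ge 2\beta$, so the induction continues. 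The densities $d^{(t)}$ are non-decreasing, so the hypothesis $d^{(t)}\ge 2\beta$ indeed persists along the iteration, and the parameters $\alpha,\beta$ never change.

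Because $d^{(t)}\le 1$ for every $t$ while $d^{(t)}\ge 2\beta\bigl(1+\tfrac{\alpha^k}{2}\bigr)^{t}$, the iteration must terminate; if it does so after $t^*$ steps then $2\beta\bigl(1+\tfrac{\alpha^k}{2}\bigr)^{t^*}\le 1$, that is, $t^*\le\log(2\beta)^{-1}/\log\bigl(1+\tfrac{\alpha^k}{2}\bigr)$. Since $\alpha^k/2<1/8$, the elementary bound $\log(1+x)\ge x(1-\tfrac{x}{2})$ for $0<x<1$ gives $\log\bigl(1+\tfrac{\alpha^k}{2}\bigr)\ge\tfrac{15}{32}\alpha^k>\tfrac13\alpha^k$; combining this with $\log(2\beta)^{-1}\le\log\beta^{-1}$ yields $t^*\le 3\alpha^{-k}\log\beta^{-1}$.

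It remains to track the sizes. Each application of Lemma~\ref{presuperregular} shrinks the common cardinality by a factor of at least $\alpha$, so $|V_i|=n_{t^*}\ge\alpha^{t^*}n$; since $0<\alpha<1$ and $t^*\le 3\alpha^{-k}\log\beta^{-1}$, this gives $|V_i|\ge\alpha^{3\alpha^{-k}\log\beta^{-1}}n$, as required. By construction $(V_1,\ldots,V_k)$ is $(\alpha,\beta)$-superregular, and the $V_i$ are disjoint because the $A_i$ are.

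The argument is essentially bookkeeping on top of Lemma~\ref{presuperregular}: the points to watch are keeping the $k$ sets within each tuple of equal cardinality, which is exactly why Lemma~\ref{presuperregular} is phrased with a common size and outputs equal-size subsets; checking that $d\ge 2\beta$ and $0<\alpha,\beta<1/4$ survive the iteration, which they do since $\alpha,\beta$ are fixed and the densities only increase; and pinning down the constant in the exponent via the logarithm estimate above. I do not expect any genuine obstacle here — all the real work is already done in Lemmas~\ref{averaging} and~\ref{presuperregular}.
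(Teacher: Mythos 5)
Your proposal is correct and follows the same iterative scheme as the paper's proof: repeatedly apply Lemma~\ref{presuperregular} until a superregular tuple is reached, using the fact that density is bounded by $1$ to cap the number of iterations at $3\alpha^{-k}\log\beta^{-1}$, and then track the geometric shrinkage in size. You merely make explicit the logarithm estimate $\log(1+\tfrac{\alpha^k}{2})\ge\tfrac{15}{32}\alpha^k$ and the persistence of the hypothesis $d^{(t)}\ge 2\beta$, which the paper leaves as routine.
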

\begin{proof}
We repeatedly apply Lemma \ref{presuperregular} until we arrive at subsets $V_i \subset A_i$ of the same size for $i \in [k]$ such that  $(V_1,\ldots,V_k)$ is
$(\alpha,\beta)$-superregular. In each application of Lemma \ref{presuperregular} we pass to subsets each with size at least an $\alpha$-fraction of the size of the
original set and the density between them is at least a factor $(1+\frac{\alpha^k}{2})$ larger than the density between the original sets. After $t$ iterations,
the density between them is at least $(1+\frac{\alpha^k}{2})^td(A_1,\ldots,A_k) \geq (1+\frac{\alpha^k}{2})^t2\beta$. This cannot continue for more than
$3\alpha^{-k}\log \beta^{-1}$ iterations since otherwise the density would be larger than $1$. Hence, we have
$|V_1|=\cdots=|V_k|\geq \alpha^{3\alpha^{-k}\log \beta^{-1}}n$, which completes the proof.
\end{proof}

The next lemma allows us to find a large matching of regular $k$-tuples.

\begin{lemma}\label{superregularmatching}
Suppose $\alpha,\beta,c,d>0$ with $\alpha,\beta<1/4$ and $d \geq 2\beta$, $\Gamma$ is a $k$-uniform hypergraph,
and $(A_1,\ldots,A_k)$ is a $(c,d)$-superregular $k$-tuple of disjoint vertex subsets each of cardinality $N$.
Then there is a positive integer $r$ such that for each $i \in [k]$
there is a partition $A_i=A_{i,0} \cup A_{i,1} \cup \ldots \cup A_{i,r}$ with $|A_{i,0}| < cN$,
and for each $j \in [r]$ the $k$-tuple $(A_{1,j},\ldots,A_{k,j})$ is $(\alpha,\beta)$-superregular
with $|A_{1,j}|=|A_{2,j}|=\cdots=|A_{k,j}| \geq \alpha^{3\alpha^{-k}\log \beta^{-1}}cN$.
\end{lemma}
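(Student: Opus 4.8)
The plan is to prove Lemma~\ref{superregularmatching} by a greedy extraction argument: repeatedly pull out of the current ``leftover'' a superregular $k$-tuple whose $k$ parts all share a common cardinality, delete it, and iterate until the leftover becomes too small. Set $A_i^{(1)}=A_i$ for each $i\in[k]$, and at step $j$ let $A_i^{(j)}=A_i\setminus(A_{i,1}\cup\cdots\cup A_{i,j-1})$ be what remains of $A_i$. The point to keep in mind throughout is that, since at every step we delete sets $A_{1,j},\ldots,A_{k,j}$ of one common cardinality $\ell_j$, the leftovers stay balanced: $|A_1^{(j)}|=\cdots=|A_k^{(j)}|=:n_j=N-\sum_{s<j}\ell_s$ for every $j$.

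I would then set up the inductive step. Suppose that at step $j$ we still have $n_j\ge cN$. Since $A_1^{(j)},\ldots,A_k^{(j)}$ are disjoint subsets of common cardinality $n_j=|A_i^{(j)}|\ge c|A_i|$, the hypothesis that $(A_1,\ldots,A_k)$ is $(c,d)$-superregular yields $d(A_1^{(j)},\ldots,A_k^{(j)})\ge d\ge 2\beta$. Hence Lemma~\ref{superregular} applies to $(A_1^{(j)},\ldots,A_k^{(j)})$ with the parameters $\alpha,\beta$, producing subsets $A_{i,j}\subseteq A_i^{(j)}$ of common cardinality $\ell_j\ge \alpha^{3\alpha^{-k}\log\beta^{-1}}n_j$ such that $(A_{1,j},\ldots,A_{k,j})$ is $(\alpha,\beta)$-superregular; and $n_j\ge cN$ upgrades this to $\ell_j\ge\alpha^{3\alpha^{-k}\log\beta^{-1}}cN$, the bound demanded in the statement.

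Finally I would handle termination and assembly. Each $\ell_j$ is a positive integer, so $n_1>n_2>\cdots$ is a strictly decreasing sequence of nonnegative integers; hence there is a first index $r+1$ with $n_{r+1}<cN$, and $r\ge 1$ since $n_1=N\ge cN$. We stop there and set $A_{i,0}=A_i^{(r+1)}$, so that $|A_{i,0}|=n_{r+1}<cN$. By construction $A_i=A_{i,0}\cup A_{i,1}\cup\cdots\cup A_{i,r}$ is a partition for each $i$, every $k$-tuple $(A_{1,j},\ldots,A_{k,j})$ with $j\in[r]$ is $(\alpha,\beta)$-superregular with all $k$ parts of the same cardinality $\ell_j\ge\alpha^{3\alpha^{-k}\log\beta^{-1}}cN$, and $|A_{i,0}|<cN$ for every $i$ --- exactly the conclusion.

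The only genuinely load-bearing point --- the place where the hypotheses are actually used --- is maintaining the density lower bound $d(A_1^{(j)},\ldots,A_k^{(j)})\ge 2\beta$ for the shrinking leftover. This is precisely why one assumes $(c,d)$-superregularity of $(A_1,\ldots,A_k)$ rather than the mere inequality $d(A_1,\ldots,A_k)\ge 2\beta$, and why it is important that each extracted chunk has its $k$ parts of equal size, so that the leftover remains balanced and Lemma~\ref{superregular} stays applicable at every step. I do not anticipate any real obstacle beyond this observation; the rest is routine bookkeeping.
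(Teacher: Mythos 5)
Your proof is correct and is essentially identical to the paper's: both greedily extract $(\alpha,\beta)$-superregular tuples of equal-sized parts via Lemma~\ref{superregular}, using $(c,d)$-superregularity of $(A_1,\ldots,A_k)$ to keep the density of the balanced leftover at least $d\geq 2\beta$ until the leftover drops below $cN$. The only cosmetic difference is your termination argument (strictly decreasing sequence of nonnegative integers) versus the paper's explicit bound of $c^{-1}\alpha^{-3\alpha^{-k}\log\beta^{-1}}$ on the number of steps; both are fine.
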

\begin{proof}

In the first step, we pick out subsets $A_{i,1} \subset A_i$ for $i \in [k]$ such that the $k$-tuple $(A_{1,1},\ldots,A_{k,1})$ is
$(\alpha,\beta)$-superregular and $|A_{i,1}|=\ldots=|A_{k,1}| \geq \alpha^{3\alpha^{-k}\log \beta^{-1}}N$. We can do this by Lemma \ref{superregular}
since the $k$-tuple $(A_1,\ldots,A_k)$ is $(c,d)$-superregular and hence $d(A_1,\ldots,A_k) \geq d \geq 2\beta$.

Suppose we have already picked out $A_{i,\ell}$ for $i \in [k], \ell \in [j]$ satisfying that for each $\ell$,
$(A_{1,\ell},\ldots,A_{k,\ell})$ is $(\alpha,\beta)$-superregular, and $|A_{1,\ell}|=\cdots=|A_{k,\ell}| \geq \alpha^{3\alpha^{-k}\log \beta^{-1}}cN$.
Let $B_i=A_i \setminus \bigcup_{\ell \in j} A_{i,\ell}$, so $|B_1|=\cdots=|B_k|$. If  $|B_1| < cN$, then we let $A_{i,0}=B_i$ for $i \in [k]$
and the proof is complete. Otherwise, we pick out subsets $A_{i,j+1} \subset B_i$ for $i \in [k]$ satisfying
$$|A_{1,j+1}|=\cdots=|A_{k,j+1}| \geq \alpha^{3\alpha^{-k}\log \beta^{-1}}|B_1| \geq \alpha^{3\alpha^{-k}\log \beta^{-1}}cN$$
and $(A_{1,j+1},\ldots,A_{k,j+1})$ is $(\alpha,\beta)$-superregular. We can do this by Lemma \ref{superregular} since $(A_1,\ldots,A_k)$ is $(c,d)$-superregular,
$|B_i| \geq cN=c|A_i|$ for $i \in [k]$, and hence $d(B_1,\ldots,B_k) \geq d \geq 2\beta$. As each $A_{i,j}$ has cardinality at least
$\alpha^{3\alpha^{-k}\log \beta^{-1}}cN$, this process terminates in at most
$N/\left(\alpha^{3\alpha^{-k}\log \beta^{-1}}cN \right)=c^{-1}\alpha^{-3\alpha^{-k}\log \beta^{-1}}$ steps, and when this happens, we have the desired
partitions.
\end{proof}

\subsection{Shattering sets with few copies of $H$}\label{lastsubsect}

The following lemma is the main result of this section and is crucial for the proof of Theorem \ref{main}. Before going into the precise
statement and proof, we give a rough sketch. Let $H$ be a graph with vertex set $[h]$ and suppose $G$ is a graph with disjoint vertex sets $V_1,\ldots,V_h$
of the same size with few copies of $H$ with the copy of vertex $i$ in $V_i$ for $i \in [h]$. The lemma then says that there is an edge $(i,j)$ of $H$ for which there is an
$(\alpha,c,t)$-shattering of $(V_i,V_j)$, where $c>0$ depends only on $h$ and $t$ is not too large as a function of $\alpha$ and $h$.

The proof is by induction on $h$, with the base case $h=2$ being trivial. Let $H'$ be the induced subgraph of $H$ with vertex set $[h-1]$. The proof splits into
two cases. In the first case, there are large subsets $V_i' \subset V_i$ with few copies of $H'$ between $V_1',\ldots,V_{h-1}'$ with the copy of vertex $i$ lying
in $V_i'$. In this case, by induction, we can shatter a pair $(V_i',V_j')$ with $(i,j)$ an edge of $H'$ (and hence of $H$), and this extends to a shattering of $(V_i,V_j)$, completing this case.

In the second case, for all large subsets $V_i' \subset V_i$ there are a substantial number of copies of $H'$ between $V_1',\ldots,V_{h-1}'$ with the copy of
$i$ lying in $V_i'$. We create an auxiliary $(h-1)$-partite $(h-1)$-uniform hypergraph $\Gamma$ with parts $V_1,\ldots,V_{h-1}$ where
$(v_1,\ldots,v_{h-1}) \in V_1 \times \ldots \times V_{h-1}$ is an edge of $\Gamma$ if these vertices form a copy of $H'$ in $G$ with vertex $v_i$ the copy of $i$.
In this case we can use Lemma \ref{superregularmatching} to partition each $V_i=V_{i,0} \cup \ldots \cup \ldots \cup V_{i,z}$ with $i \in [h-1]$ such that
for each $j \in [z]$ the $(h-1)$-tuple $(V_{1,j},\ldots,V_{h-1,j})$ is $(\alpha,\beta)$-superregular in $\Gamma$ with $\beta$
not too small, $|V_{1,j}|=\ldots = |V_{h-1,j}|$ is large, and $|V_{i,0}|$ not too large. By this superregularity and the definition of $\Gamma$,
each vertex $v \in V_h$ which has for some $j$ at least $\alpha|V_{i,j}|$ neighbors in $V_{i,j}$ for each neighbor $i$ of $h$ in $H$ is a vertex of
many copies of $H$ in $G$ with the copy of $i$ in $V_i$. As there are few copies of $H$ with the copy of $i$ in $V_i$ for each $i$, this implies that
for each $j$, there are few vertices in $V_h$ which have at least $\alpha|V_{i,j}|$ neighbors in $V_{i,j}$ for each neighbor $i$ of $h$. In other words,
for most vertices $v \in V_h$ there is a neighbor $i$ of $h$ such that $v$ has less than $\alpha|V_{i,j}|$ neighbors in $V_{i,j}$.
We partition $V_h$ where a vertex $v \in V_h$ lies in a certain subset in this partition depending on
which pairs $(i,j)$ with $i$ a neighbor of $h$ in $H$ and $j \in [z]$ the vertex $v$ has less than $\alpha|V_{i,j}|$ neighbors in $V_{i,j}$. We get
that for some neighbor $i$ of $h$ in $H$, this partition of $V_h$ and the partition of $V_i$ form an $(\alpha,c,t)$-shattering of $(V_i,V_h)$.

\begin{lemma}\label{keylemma}
Let $0<\alpha<1/4$ and $d_h=2^{-(2/\alpha)^{h^2}}$. Let $H$ be a graph with vertex set $[h]$.
Suppose $G$ is a graph with disjoint vertex subsets $V_1,\ldots,V_h$ each of size $n$ such that the number of copies of $H$
with the copy of vertex $i$ in $V_i$ for $i \in [h]$ is at most $d_hn^h$.
Then there is an edge $(i,j)$ of $H$ for which there is an $(\alpha,h^{-2},2^{d_h^{-1}})$-shattering of the pair $(V_i,V_j)$.
\end{lemma}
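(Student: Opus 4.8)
The plan is to prove Lemma \ref{keylemma} by induction on $h$, following the two-case structure sketched in the paragraph preceding the statement. The base case $h=2$ is immediate: if $V_1,V_2$ are sets of size $n$ with at most $d_2 n^2$ edges between them, then $d(V_1,V_2) \le d_2 < \alpha$ (since $d_2 = 2^{-(2/\alpha)^4}$ is tiny), so taking the trivial partitions $V_1 = V_1$, $V_2 = V_2$ gives a $(\alpha, 1, 1)$-shattering, and $1 \ge h^{-2}$, $1 \le 2^{d_2^{-1}}$. For the inductive step, fix $H$ on $[h]$, let $H'$ be the induced subgraph on $[h-1]$, and set $\beta$ to be an appropriate intermediate threshold — something like $\beta = d_h^{1/2}$ or a suitable power, chosen so that both the number-of-copies bookkeeping below and the parameter arithmetic for the final shattering work out. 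Call a tuple $(V_1',\ldots,V_{h-1}')$ of subsets with $|V_i'| \ge \gamma n$ (for a suitable $\gamma$, likely $\gamma$ close to $1$ or a fixed small constant depending on $h$) \emph{deficient} if it contains at most $d_{h-1}|V_1'|\cdots|V_{h-1}'|$ copies of $H'$ with vertex $i$ in $V_i'$.

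Case 1: some deficient tuple $(V_1',\ldots,V_{h-1}')$ exists. Then by the induction hypothesis applied to $G$ restricted to these sets, there is an edge $(i,j)$ of $H'$ and an $(\alpha, (h-1)^{-2}, 2^{d_{h-1}^{-1}})$-shattering of $(V_i', V_j')$. I extend this to $(V_i, V_j)$ by adjoining $V_i \setminus V_i'$ and $V_j \setminus V_j'$ as extra blocks; the pairs of new blocks contribute nothing to the "low-density" sum, but since $|V_i'||V_j'| \ge \gamma^2 |V_i||V_j|$ and $(h-1)^{-2}\gamma^2 \ge h^{-2}$ for the right $\gamma$, and the block count stays below $2^{d_h^{-1}}$, we get the desired shattering. (This forces the choice of $\gamma$ and confirms why $d_h$ must shrink so fast in $h$: one needs $d_{h-1} \gg d_h$ with lots of room.)

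Case 2: no deficient tuple, i.e.\ every tuple of large subsets has many copies of $H'$. Form the $(h-1)$-uniform, $(h-1)$-partite auxiliary hypergraph $\Gamma$ on $V_1,\ldots,V_{h-1}$ whose edges are the copies of $H'$. The no-deficiency assumption says exactly that $(V_1,\ldots,V_{h-1})$ is $(\gamma, d_{h-1})$-superregular in $\Gamma$. Apply Lemma \ref{superregularmatching} with superregularity parameters $(\alpha,\beta)$ and deletion parameter $\gamma$: this partitions each $V_i = V_{i,0} \cup V_{i,1} \cup \cdots \cup V_{i,z}$ ($i \in [h-1]$) with $|V_{i,0}| < \gamma n$, each $(V_{1,j},\ldots,V_{h-1,j})$ being $(\alpha,\beta)$-superregular with common size at least $\alpha^{3\alpha^{-(h-1)}\log\beta^{-1}}\gamma n =: mn$, and $z \le \gamma^{-1}\alpha^{-3\alpha^{-(h-1)}\log\beta^{-1}}$. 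Now the core counting step: call $v \in V_h$ \emph{$j$-rich} if for every neighbor $i$ of $h$ in $H$, $v$ has at least $\alpha |V_{i,j}|$ neighbors in $V_{i,j}$. If $v$ is $j$-rich, then by $(\alpha,\beta)$-superregularity of $(V_{1,j},\ldots,V_{h-1,j})$, the sets $N(v)\cap V_{i,j}$ (for $i$ a neighbor of $h$) together with the $V_{i,j}$ for non-neighbors span at least $\beta \prod_i |V_{i,j}| \ge \beta m^{h-1} n^{h-1}$ edges of $\Gamma$, i.e.\ that many copies of $H'$, each of which extends with $v$ to a copy of $H$; and these copies are distinct for distinct $v$ in the sense of lying in distinct slots. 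Summing over all $j$-rich $v$ and all $j \in [z]$, the total number of copies of $H$ (with vertex $i$ in $V_i$) is at least $(\#\text{rich pairs}(v,j)) \cdot \beta m^{h-1} n^{h-1}$, which must be $\le d_h n^h$; hence the number of rich pairs is at most $d_h n / (\beta m^{h-1})$, so for each $j$ at most that many $v \in V_h$ are $j$-rich, and on average over $j$ the number of $j$-rich vertices is small.

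Finally I convert non-richness into a shattering. For $v \in V_h$ and each $j \in [z]$, if $v$ is not $j$-rich pick a witnessing neighbor $i = i(v,j)$ of $h$ with $|N(v)\cap V_{i,j}| < \alpha|V_{i,j}|$. Partition $V_h$ according to the function $j \mapsto (\text{whether } v \text{ is } j\text{-rich}, \text{ and if not}, i(v,j))$; the number of blocks is at most $(h)^{z}$ or so, comfortably below $2^{d_h^{-1}}$ given the bound on $z$ (this is where $z$ being only singly exponential in $\alpha^{-h}$ and $d_h$ being doubly exponential pay off). For each block $P$ of $V_h$ and each $j$, either all of $P$ is $j$-rich (a negligible fraction of $V_h \times V_h$, by the counting bound, and anyway we can lump the rich parts into $V_{h}$'s exceptional block), or there is a fixed neighbor $i_P(j)$ with $d(P, V_{i_P(j),j}) < \alpha$ for all of $P$. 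A pigeonhole/averaging over the neighbors $i$ of $h$, the blocks $P$ of $V_h$, and the indices $j$ — weighting pair $(V_{i,j}, P)$ by $|V_{i,j}||P|$ — produces one neighbor $i$ of $h$ for which the partition $\{V_{i,j}\}_j$ of $V_i$ (together with the small $V_{i,0}$ lumped in) and the partition of $V_h$ have total low-density mass at least $h^{-2}|V_i||V_h|$; that is the required $(\alpha, h^{-2}, 2^{d_h^{-1}})$-shattering of $(V_i,V_h)$, with $(i,h)$ an edge of $H$. The main obstacle, and the part needing the most care, is the parameter calibration: one must choose $\beta$, $\gamma$, and the relation between $d_{h-1}$ and $d_h$ so that (a) the count of rich pairs from Case 2 is small enough that the "low-density" mass is genuinely at least $h^{-2}|V_i||V_h|$ after all the averaging losses (a factor of $1/(h-1)$ for choosing $i$, a factor for discarding exceptional blocks $V_{i,0}$ and $V_{h,0}$, etc.), and (b) the block count $h^{z}$ stays under $2^{d_h^{-1}}$, which — since $z$ grows like $\alpha^{-O(h)}\log\beta^{-1}$ and $\beta$ is a power of $d_h$ — dictates that $d_h$ be doubly exponential in $\alpha^{-1}$ with exponent growing like $h^2$, exactly as in the statement.
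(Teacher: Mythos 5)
Your proof follows the same two-case inductive structure as the paper, and most of the key ideas (the deficient/superregular dichotomy, the auxiliary hypergraph $\Gamma$ of copies of $H'$, the matching lemma to decompose each $V_i$ into superregular pieces, the ``rich vertex'' counting, and the partition of $V_h$ by witness neighbors) are correct and are the ones the paper actually uses. You also correctly deduced the constraint $(h-1)^{-2}\gamma^2 \geq h^{-2}$, which is precisely why the paper takes the removal parameter to be $1 - 1/h$.

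The one place you would hit trouble if you tried to complete this is the choice of $\beta$. You suggest $\beta \approx d_h^{1/2}$ ``or a suitable power.'' That scale does not work: the superregularity of $(V_1,\ldots,V_{h-1})$ in $\Gamma$ comes from the non-deficiency assumption, which is phrased in terms of $d_{h-1}$, and Lemma~\ref{superregularmatching} requires $\beta \leq d_{h-1}/2$. More importantly, if you set $\beta$ as small as $d_h^{1/2}$, the piece size $\alpha^{3\alpha^{-(h-1)}\log \beta^{-1}}$ collapses and the number of pieces $z$ grows like $2^{(2/\alpha)^{h^2+h-1}}$, which dwarfs $d_h^{-1} = 2^{(2/\alpha)^{h^2}}$; the resulting partition of $V_h$ then has far more than $2^{d_h^{-1}}$ blocks and the shattering bound fails. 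The correct scale is $\beta = d_{h-1}/2 = 2^{-(2/\alpha)^{(h-1)^2} - 1}$, which is \emph{much} larger than $d_h^{1/2}$ (as a ``power'' of $d_h$ it is $d_h^{(\alpha/2)^{2h-1}}$, not $d_h^{1/2}$); with that choice, $z$ is roughly $2^{(2/\alpha)^{h^2-h+1}} \ll d_h^{-1}$ and the block count $2^{(h-1)z} \leq 2^{d_h^{-1}}$ goes through. A second, smaller slip: when counting copies of $H$ through a $j$-rich vertex $v$, the superregularity gives at least $\beta \prod_i |N(v)\cap V_{i,j}| \geq \beta\alpha^{h-1}\prod_i |V_{i,j}|$, so you are missing a factor $\alpha^{h-1}$; this does not change the shape of the argument but needs to be carried through the final parameter check.

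Aside from those calibration issues, your variant of the rich-pair bound (bounding the \emph{total} number of rich $(v,j)$ pairs rather than showing that for \emph{every} $j$ fewer than $|V_h|/h$ vertices are $j$-rich, as the paper does) is a legitimate and slightly different bookkeeping route, and your coarser partition of $V_h$ by the map $j \mapsto (\text{rich?}, i(v,j))$ (at most $h^z$ blocks instead of the paper's $2^{|I|z}$) also works and is arguably cleaner.
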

\begin{proof}
The proof is by induction on $h$. In the base case $h=2$, as the number of edges between $V_1$ and $V_2$ is at most $d_2n^2 < \alpha n^2$,
the trivial partitions of $V_1$ and $V_2$ form an $(\alpha,1,1)$-shattering of the pair $(V_1,V_2)$. Thus the base case holds.
The induction hypothesis is that the lemma holds for $h-1$.

Let $H'$ be the induced subgraph of $H$ on vertex set $[h-1]$. Let $\Gamma$ be the $(h-1)$-partite $(h-1)$-uniform hypergraph on $V_1,\ldots,V_{h-1}$ such that $(v_1,\ldots,v_{h-1}) \in V_1 \times \ldots \times V_{h-1}$
forms an edge of $\Gamma$ if $(v_i,v_j)$ is adjacent in $G$ whenever $(i,j)$ is an edge of $H'$.

The proof splits into two cases, depending on whether or not $(V_1,\ldots,V_{h-1})$ is $(1-\frac{1}{h},d_{h-1})$-superregular in $\Gamma$.

\noindent {\bf Case 1:} $(V_1,\ldots,V_{h-1})$ is not $(1-\frac{1}{h},d_{h-1})$-superregular in $\Gamma$.
In this case, there are sets $W_i \subset V_i$ for $i \in [h-1]$ with $|W_i| \geq (1-\frac{1}{h})|V_i|$ and $d(W_1,\ldots,W_{h-1}) < d_{h-1}$.
By Lemma \ref{averaging}, letting $n' = \lceil (1-\frac{1}{h})n \rceil$,
we may suppose further that $|W_1|=\ldots=|W_{h-1}|=n'$. Therefore, the number of copies of $H'$ with the copy of vertex $i$ in $V_i$
for $i \in [h-1]$ is at most $d_{h-1}n'^{h-1}$. By the induction hypothesis, there is an edge $(i,j)$ of $H'$ (and hence also of $H$)
and partitions $W_i=A_1 \cup \ldots \cup A_{r-1}$ and $W_j=B_1 \cup \ldots \cup B_{s-1}$ with $r-1,s-1 \leq 2^{d_{h-1}^{-1}}$ and
the sum of $|A_p||B_{q}|$ over all pairs $(A_p,B_{q})$ with $d(A_p,B_{q})<\alpha$ is at least $(h-1)^{-2}|W_i||W_j|
\geq (h-1)^{-2}(1-\frac{1}{h})^2|V_i||V_j|=h^{-2}|V_i||V_j|$. Letting $A_r=V_i \setminus W_i$ and $B_s =V_j \setminus W_j$,
we have an $(\alpha,h^{-2},2^{d_{h-1}^{-1}}+1)$-shattering of the pair $(V_i,V_j)$, which completes the proof in this case.

\noindent {\bf Case 2:} $(V_1,\ldots,V_{h-1})$ is $(1-\frac{1}{h},d_{h-1})$-superregular in $\Gamma$.
In this case, by Lemma \ref{superregularmatching}, there are partitions $V_i=V_{i,0} \cup V_{i,1} \cup \ldots \cup V_{i,z}$ for $i \in [h-1]$ with
$|V_{i,0}|<(1-\frac{1}{h})|V_i|=(1-\frac{1}{h})n$ such that for each $j \in [z]$ the $(h-1)$-tuple $(V_{1,j},\ldots,V_{h-1,j})$
is $(\alpha,\beta)$-superregular in $\Gamma$ with $\beta=d_{h-1}/2$, and $|V_{1,j}|=|V_{2,j}|=\cdots=|V_{h-1,j}| \geq
\gamma n$ where $$\gamma=\alpha^{3\alpha^{1-h}\log \beta^{-1}}(1-\frac{1}{h}) > \beta^{3\alpha^{-h}} = \left(\frac{d_{h-1}}{2}\right)^{3\alpha^{-h}} >
d_{h-1}^{4\alpha^{-h}}=2^{-4\alpha^{-h}(2/\alpha)^{(h-1)^2}} \geq 2^{-(2/\alpha)^{h^2-h+1}}.$$ Since each $V_{i,j}$ has cardinality at least $\gamma n$
and each $V_i$ has cardinality $n$, we have $z \leq \frac{n}{\gamma n} =\gamma^{-1}$.

Let $I$ denote the set of neighbors of $h$ in $H$. Suppose for contradiction that there is $j \in [z]$ such that at least $|V_h|/h$ vertices $v \in V_h$ have
at least $\alpha|V_{i,j}|$ neighbors in $V_{i,j}$ for all $i \in I$. For $i \in I$, let $N(v,i)$ denote the set of neighbors of $v$ in $V_{i,j}$,
and for $i \in [h-1] \setminus I$, let
$N(v,i)=V_{i,j}$. So for at least $|V_h|/h$ vertices $v \in V_h$, $|N(v,i)| \geq \alpha|V_{i,j}|$ for $i \in [h-1]$.
Since the $(h-1)$-tuple $(V_{1,j},\ldots,V_{h-1,j})$ is $(\alpha,\beta)$-superregular in $\Gamma$, the number of copies of $H$ containing such a fixed $v$ and
with the copy of vertex $i$ in $V_{i,j}$ for $i \in [h-1]$ is at least
$$\beta|N(v,1)|\cdots|N(v,h-1)| \geq \beta\alpha^{h-1}|V_{1,j}|\cdots|V_{h-1,j}|\geq \beta \alpha^{h-1}\left(\gamma n \right)^{h-1}.$$
Hence, the number of copies of $H$ with the copy of vertex $i$ in $V_i$ for $i \in [h]$ is at least
$$\frac{|V_h|}{h}\beta \alpha^{h-1}\left(\gamma n \right)^{h-1} = h^{-1}\beta \alpha^{h-1}
\gamma^{h-1}n^h \geq (2h)^{-1}d_{h-1}\alpha^{h-1}2^{-(h-1)(2/\alpha)^{h^2-h+1}}n^h > 2^{-(2/\alpha)^{h^2}}n^h=d_h n^h,$$
contradicting that there are at most $d_h n^h$ copies of $H$ with the copy of vertex $i$ in $V_i$ for $i \in [h]$.

So, for every $j \in [z]$, less than $|V_h|/h$ vertices $v \in V_h$ have at least $\alpha|V_{i,j}|$ neighbors in $V_{i,j}$ for all
$i \in I$. For each subset $S \subset I \times [z]$, let $A_S$ denote the set of vertices $v \in V_{h}$ with less than $\alpha|V_{i,j}|$ neighbors in
$V_{i,j}$ for all $(i,j) \in S$ and at least $\alpha |V_{i,j}|$ neighbors in $V_{i,j}$ for all $(i,j) \in \left(I \times [z]\right) \setminus S$. We have
$V_h = \bigcup_{S \in I \times [z]} A_S$ is a partition of $V_h$ into $2^{|I|z}$ subsets. As for each $j \in [z]$, we have $|V_{1,j}|=\cdots=|V_{h-1,j}|$ and
more than $(1-1/h)|V_h|$ vertices in $V_h$ have less than $\alpha|V_{i,j}|$ neighbors in $V_{i,j}$ for some $i \in I$, the sum of $|A_S||V_{i,j}|$ over all
$S \subset I \times [z]$ and $i \in I$ for which $d(A_S,V_{i,j})<\alpha$ is more than
$(1-1/h)|V_h||V_{1,j}|$. Summing over all $j \in [z]$, the sum of $|A_S||V_{i,j}|$ over all
$S \subset I \times [z]$, $i \in I$, and $j \in [z]$ for which $d(A_S,V_{i,j})<\alpha$ is at least
$\sum_{j \in [z]} (1-1/h)|V_h||V_{1,j}| \geq (1-1/h)|V_h|(|V_1|/h) = (1-1/h)h^{-1}n^2$. Hence, there is $i \in I$ such that the sum of $|A_S||V_{i,j}|$
over all $S \subset I \times [z],j \in [z]$ for which $d(A_S,V_{i,j})<\alpha$ is at least $\frac{1}{|I|}(1-1/h)h^{-1}n^2 \geq h^{-2}n^2$.
As also $z+1,2^{|I|z} \leq 2^{(h-1)z} \leq 2^{d_h^{-1}}$, it follows that the partitions $V_h=\bigcup_{S \subset I \times [z]} A_S$ and
$V_i=\bigcup_{0 \leq j \leq z} V_{i,j}$ form an $(\alpha,h^{-2},2^{d_h^{-1}})$-shattering
of the pair $(V_i,V_h)$.
\end{proof}

\section{A defect inequality for convex functions}\label{defectsect}

Jensen's inequality states that if $f$ is a convex function, $\epsilon_1,\ldots,\epsilon_s$ are nonnegative real numbers which sum to $1$, and
$x_1,\ldots,x_s$ are real numbers, then $$\epsilon_1f(x_1)+\cdots+\epsilon_sf(x_s) \geq f(\epsilon_1x_1+\cdots+\epsilon_sx_s).$$
The following lemma is a simple consequence of Jensen's inequality.

\begin{lemma}\label{firstdefect}
Let $f:\mathbb{R}_{\geq 0} \to \mathbb{R}$ be a convex function, $\epsilon_1,\ldots,\epsilon_s$ and $x_1,\ldots,x_s$ be nonnegative real numbers with
$\sum_{1 \leq i \leq s}\epsilon_i=1$. For $I \subset [s]$, $c=\sum_{i \in I} \epsilon_i$ with $0<c<1$, $u=\sum_{i \in I} \epsilon_ix_i /c$, and
$v=\sum_{i \in [s]\setminus I} \epsilon_ix_i/(1-c)$, we have
$$\sum_{1 \leq i \leq s} \epsilon_if(x_i) \geq cf(u)+(1-c)f(v).$$
\end{lemma}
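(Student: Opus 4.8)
The plan is to derive Lemma \ref{firstdefect} from Jensen's inequality by applying it twice: once to the weights restricted to $I$ and once to the weights restricted to $[s]\setminus I$, and then combining the two resulting inequalities. The key observation is that the left-hand side $\sum_{1\le i\le s}\epsilon_i f(x_i)$ splits naturally as the sum over $I$ plus the sum over the complement, and each piece, after normalizing the weights, is in exactly the form where Jensen's inequality applies.

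\begin{proof}
First consider the index set $I$. The numbers $\epsilon_i/c$ for $i\in I$ are nonnegative and sum to $1$ by the definition of $c$, and the $x_i$ are nonnegative reals. Applying Jensen's inequality with the convex function $f$ to these weights and points gives
$$\sum_{i\in I}\frac{\epsilon_i}{c}f(x_i)\ \ge\ f\!\left(\sum_{i\in I}\frac{\epsilon_i}{c}x_i\right)=f(u),$$
so that $\sum_{i\in I}\epsilon_i f(x_i)\ge c\,f(u)$. Next consider the complementary index set $[s]\setminus I$. The numbers $\epsilon_i/(1-c)$ for $i\in[s]\setminus I$ are nonnegative and sum to $1$, since $\sum_{i\in[s]\setminus I}\epsilon_i=1-c$. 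Applying Jensen's inequality again gives
$$\sum_{i\in[s]\setminus I}\frac{\epsilon_i}{1-c}f(x_i)\ \ge\ f\!\left(\sum_{i\in[s]\setminus I}\frac{\epsilon_i}{1-c}x_i\right)=f(v),$$
so that $\sum_{i\in[s]\setminus I}\epsilon_i f(x_i)\ge(1-c)\,f(v)$. Adding the two displayed conclusions and using that $[s]$ is the disjoint union of $I$ and $[s]\setminus I$, we obtain
$$\sum_{1\le i\le s}\epsilon_i f(x_i)=\sum_{i\in I}\epsilon_i f(x_i)+\sum_{i\in[s]\setminus I}\epsilon_i f(x_i)\ \ge\ c\,f(u)+(1-c)f(v),$$
as desired.
\end{proof}

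There is essentially no obstacle here: the hypothesis $0<c<1$ guarantees that both normalizations $\epsilon_i/c$ and $\epsilon_i/(1-c)$ are legitimate (no division by zero), and the nonnegativity of the $x_i$ together with $f$ being defined on $\mathbb{R}_{\ge 0}$ ensures that all the arguments of $f$ — in particular $u$ and $v$, which are weighted averages of the $x_i$ — lie in the domain of $f$. The only point to be careful about is bookkeeping: making sure the weights in each application genuinely sum to $1$, which is exactly why we divide by $c$ and by $1-c$ respectively. If one wanted, one could instead phrase this as a single application of Jensen's inequality to the two-point distribution putting mass $c$ at $u$ and mass $1-c$ at $v$ after first coarsening, but splitting and applying Jensen twice is the cleanest route and is the approach I would present.
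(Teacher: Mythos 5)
Your proof is correct and matches the paper's argument exactly: both apply Jensen's inequality separately to the normalized weights on $I$ and on $[s]\setminus I$, then add the two resulting inequalities.
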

\begin{proof}
By Jensen's inequality, we have
$$f(u) \leq \sum_{i \in I} \frac{\epsilon_i}{c}f(x_i)$$
Since $c=\sum_{i \in I} \epsilon_i$ and $1=\sum_{1 \leq i \leq s} \epsilon_i$, then $1-c=\sum_{i \in [s] \setminus I} \epsilon_i$.
By Jensen's inequality, we have
$$f(v) \leq \sum_{i \in [s]\setminus I} \frac{\epsilon_i}{1-c}f(x_i)$$
From the two previous inequalities, we get $$cf(u)+(1-c)f(v) \leq \sum_{1 \leq i \leq s} \epsilon_if(x_i).$$
\end{proof}

Note that equality holds in Jensen's inequality when the numbers $x_1,\ldots,x_s$ are equal. A defect inequality shows that if these numbers are
far from being equal, then Jensen's inequality can be significantly improved. The following lemma
is a defect inequality for a particular convex function which we will use in the proof of Theorem \ref{main}. The lemma assumes
that a proportion $c$ of the weight is distributed amongst numbers which are at most $1/10$ of the average.

\begin{lemma}\label{secondefect}
Let $f:\mathbb{R}_{\geq 0} \to \mathbb{R}$ be the convex function given by $f(x)=x\log x$ for $x>0$ and $f(0)=0$. Let
$\epsilon_1,\ldots,\epsilon_s$, and $x_1,\ldots,x_s$ be nonnegative real numbers with $\sum_{1 \leq i \leq s}\epsilon_i=1$, and $a=\sum_{1 \leq i \leq s} \epsilon_ix_i$.
Suppose $\beta <1$ and $I \subset [s]$ is such that $x_i \leq \beta a$ for $i \in I$
and let $c=\sum_{i \in I} \epsilon_i$. Then $$\sum_{1 \leq i \leq s} \epsilon_if(x_i) \geq f(a)+(1-\beta+f(\beta))ca.$$
\end{lemma}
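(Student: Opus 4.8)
The plan is to split the sum $\sum_i \epsilon_i f(x_i)$ according to the set $I$ and apply Lemma \ref{firstdefect} to reduce to just two terms, then exploit convexity and the hypothesis $x_i \le \beta a$ on $I$ to bound those two terms from below. Write $c = \sum_{i \in I}\epsilon_i$, $u = \tfrac{1}{c}\sum_{i\in I}\epsilon_i x_i$, and $v = \tfrac{1}{1-c}\sum_{i\notin I}\epsilon_i x_i$, so that $cu + (1-c)v = a$. By Lemma \ref{firstdefect}, $\sum_i \epsilon_i f(x_i) \ge c f(u) + (1-c) f(v)$. Note $u \le \beta a$ since $u$ is a weighted average of the $x_i$ with $i \in I$, each of which is at most $\beta a$. (If $c = 0$ there is nothing to prove since then the claimed bonus term vanishes and we just have Jensen; if $c=1$ then $a \le \beta a < a$ forces $a = 0$ and both sides are $0$. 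So assume $0 < c < 1$.)

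Next I would handle the $(1-c) f(v)$ term. Since $f$ is convex, the tangent line to $f$ at the point $a$ lies below $f$, so $f(v) \ge f(a) + f'(a)(v - a)$ where $f'(a) = \log a + 1$. Multiplying by $1-c$ and using $(1-c)v = a - cu$, this gives $(1-c) f(v) \ge (1-c)f(a) + (\log a + 1)(a - cu - (1-c)a) = (1-c)f(a) + (\log a + 1)\, c(a - u)$. Combining with $c f(u)$, the target lower bound becomes
$$\sum_i \epsilon_i f(x_i) \ge f(a) + c\bigl[ f(u) - f(a) + (\log a + 1)(a - u) \bigr].$$
So it remains to show the bracketed quantity is at least $(1 - \beta + f(\beta))a$.

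The remaining task is therefore a one-variable inequality: for $0 \le u \le \beta a$, show $g(u) := u\log u - a\log a + (\log a + 1)(a - u) \ge (1 - \beta + \beta\log\beta)a$. I would compute $g'(u) = \log u + 1 - (\log a + 1) = \log(u/a) \le \log \beta < 0$, so $g$ is decreasing on $[0,\beta a]$ and is minimized at $u = \beta a$. Plugging in $u = \beta a$: $g(\beta a) = \beta a \log(\beta a) - a\log a + (\log a + 1)(a - \beta a) = \beta a(\log\beta + \log a) - a\log a + (\log a + 1)a(1-\beta)$. The $\log a$ terms collect as $a\log a(\beta - 1 + 1 - \beta) = 0$, leaving $g(\beta a) = \beta a \log\beta + a(1 - \beta) = (1 - \beta + f(\beta))a$ exactly. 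This is precisely the claimed bound, so we are done; the only mild subtlety is the boundary behavior at $u = 0$ (where $u \log u \to 0$, consistent with $f(0) = 0$) and the degenerate cases $c \in \{0,1\}$ or $a = 0$, which I would dispatch in a sentence at the start. The main obstacle, such as it is, is simply organizing the tangent-line bound and the monotonicity check cleanly; there is no real difficulty beyond bookkeeping.
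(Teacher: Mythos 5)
Your argument is correct and essentially identical to the paper's: both reduce to $cf(u)+(1-c)f(v)$ via Lemma~\ref{firstdefect}, apply a tangent-line lower bound to the $v$-term, and close with a monotonicity argument. The paper normalizes by $a$ (writing $\delta=u/a$, $\delta'=v/a$, using $f(x)\ge x-1$ for the $\delta'$-term, and observing that $f(\delta)+1-\delta$ is decreasing on $[0,1]$), whereas you work unnormalized (tangent to $f$ at $a$, then minimizing $g(u)$ over $[0,\beta a]$); under the substitution $u=a\delta$ these are exactly the same steps.
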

\begin{proof}
Notice that if $a$ or $c$ is $0$, the desired inequality is Jensen's inequality. We may therefore assume $a,c>0$. We also have $c<1$ as otherwise $c=1$,
$\epsilon_i=0$ for $i \in [s] \setminus I$, and $a=\sum_{1\leq i \leq s} \epsilon_ix_i = \sum_{i \in I} \epsilon_ix_i \leq \beta a$ as $x_i \leq \beta a$
for $i \in I$, a contradiction.  Let $u=\sum_{i \in I} \epsilon_ix_i/c$, which is a weighted average of the $x_i$ with $i \in I$, and $v=\sum_{i \in [s] \setminus I} \epsilon_ix_i/(1-c)$.
Let $\delta=u/a$, so $\delta \leq \beta$, and $\delta'=v/a=(1-\delta c)/(1-c)=1+\frac{(1-\delta)c}{1-c}$.
Note also that $cu=ca\delta$, $(1-c)v=a(1-c)\delta'$, and $cu+(1-c)v=a$. Hence, by Lemma \ref{firstdefect}, we have
\begin{eqnarray*}\sum_{1 \leq i \leq s} \epsilon_if(x_i) & \geq & cf(u)+(1-c)f(v) = f(a)+caf(\delta)+a(1-c)f(\delta') \\ & \geq &
f(a)+caf(\delta)+a(1-c)\frac{(1-\delta)c}{1-c} =  f(a)+\left(f(\delta)+1-\delta\right)ca,
\end{eqnarray*}
where the first equality follows from substituting in $f(x)=x\log x$ for $0<x \leq 1$ and $f(0)=0$, and the second inequality follows from
substituting $x=\delta'$ into the inequality $f(x) \geq x-1$ for $x \geq 0$. Since $0 \leq \delta \leq \beta < 1$,  and $f(x)+1-x$ is a decreasing function 
on the interval $[0,1]$, we get the desired inequality. 
\end{proof}

\section{Proof of Theorem \ref{main}}\label{section4}

In this section we prove Theorem \ref{main}. Our presentation is chosen to elucidate the similarities and differences
with the well known proof of Szemer\'edi's regularity lemma.

Let $G=(V,E)$ be a graph. Recall that for vertex subsets $A$ and $B$, $e(A,B)$ denotes the number of pairs $(a,b) \in A \times B$ that are edges of $G$ and
$d(A,B)=\frac{e(A,B)}{|A||B|}$ is the density of the pair $(A,B)$, which is the fraction of pairs $(a,b) \in A \times B$ that are edges of $G$.
For a function $f:\mathbb{R}_{\geq 0} \to \mathbb{R}$ define $$f(A,B)=\frac{|A||B|}{|V|^2}f(d(A,B)).$$ For partitions $\mathcal{A}$ of $A$ and $\mathcal{B}$ of $B$, let
$$f(\mathcal{A},\mathcal{B})=\sum_{A' \in \mathcal{A},B' \in \mathcal{B}}f(A',B')$$
and $f(\mathcal{A})=f(\mathcal{A},\mathcal{A})$.

\begin{lemma}\label{usefullem}
Let $f:\mathbb{R}_{\geq 0} \to \mathbb{R}$ be a convex function, $G=(V,E)$ be a graph, and $d=d(V,V)=2|E|/|V|^2$.
\begin{enumerate}
\item For vertex subsets $A,B \subset V$ and partitions $\mathcal{A}$ of $A$ and $\mathcal{B}$ of $B$, we have
$f(\mathcal{A},\mathcal{B}) \geq f(A,B)$.
\item If $\mathcal{P}$ is a partition of $V$, then $f(\mathcal{P}) \geq f(d)$.
\item If $\mathcal{P}$ and $\mathcal{P}'$ are partitions of $V$ and $\mathcal{P}'$ is a refinement of $\mathcal{P}$, then
$f(\mathcal{P}') \geq f(\mathcal{P})$.
\item Suppose that $A,B$ are vertex subsets with $d(A,B)\geq 10\alpha$, partitions $\mathcal{A}$ of $A$ and $\mathcal{B}$ of $B$ form an
$(\alpha,c,t)$-shattering of $(A,B)$, and $f(x)=x\log x$ for $x>0$ and $f(0)=0$. Then
$$f(\mathcal{A},\mathcal{B}) \geq f(A,B)+\frac{c}{2}\frac{e(A,B)}{|V|^2}.$$
\end{enumerate}
\end{lemma}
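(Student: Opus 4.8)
My plan is to derive all four parts from Jensen's inequality — or, for part 4, its refinement Lemma \ref{secondefect} — after rewriting $f(\mathcal{A},\mathcal{B})$ as a weighted average of values of $f$. First I would fix nonempty $A,B$ and partitions $\mathcal{A},\mathcal{B}$ (empty blocks contribute $0$ to every quantity involved and may be discarded), and set $\epsilon_{A',B'}=|A'||B'|/(|A||B|)$ for $(A',B')\in\mathcal{A}\times\mathcal{B}$. These weights are nonnegative and sum to $1$ since $\sum_{A'\in\mathcal{A}}|A'|=|A|$ and $\sum_{B'\in\mathcal{B}}|B'|=|B|$. With $x_{A',B'}=d(A',B')$ one has $\sum_{A',B'}\epsilon_{A',B'}x_{A',B'}=\frac{1}{|A||B|}\sum_{A',B'}e(A',B')=\frac{e(A,B)}{|A||B|}=d(A,B)$, because the products $A'\times B'$ partition $A\times B$. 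Dividing the relevant definitions through by $|A||B|/|V|^2$, this says $f(\mathcal{A},\mathcal{B})=\frac{|A||B|}{|V|^2}\sum_{A',B'}\epsilon_{A',B'}f(x_{A',B'})$ while $f(A,B)=\frac{|A||B|}{|V|^2}f\!\left(\sum_{A',B'}\epsilon_{A',B'}x_{A',B'}\right)$.

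Part 1 would then follow immediately: by Jensen's inequality $\sum_{A',B'}\epsilon_{A',B'}f(x_{A',B'})\geq f(d(A,B))$, and multiplying by $|A||B|/|V|^2\geq 0$ gives $f(\mathcal{A},\mathcal{B})\geq f(A,B)$. Part 2 is the special case $A=B=V$, $\mathcal{A}=\mathcal{B}=\mathcal{P}$, for which $f(\mathcal{P})=f(\mathcal{P},\mathcal{P})\geq f(V,V)=f(d)$ since $d=d(V,V)$. For part 3 I would write $\mathcal{P}=\{P_1,\dots,P_m\}$ and let $\mathcal{P}'_i$ be the set of blocks of $\mathcal{P}'$ contained in $P_i$, which is a partition of $P_i$; grouping the sum defining $f(\mathcal{P}')$ according to which blocks of $\mathcal{P}$ contain the two parts gives $f(\mathcal{P}')=\sum_{i,j}f(\mathcal{P}'_i,\mathcal{P}'_j)\geq\sum_{i,j}f(P_i,P_j)=f(\mathcal{P})$, the inequality being part 1 applied to each pair $(P_i,P_j)$.

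For part 4 I would replace plain Jensen by Lemma \ref{secondefect}, applied with $\beta=1/10$, the same weights and values, and $a=d(A,B)\geq 10\alpha$. Take $I$ to be the set of pairs $(A_i,B_j)\in\mathcal{A}\times\mathcal{B}$ with $d(A_i,B_j)<\alpha$; for these, $x_{A_i,B_j}<\alpha\leq\tfrac1{10}d(A,B)=\beta a$, so the hypothesis of Lemma \ref{secondefect} holds. Its conclusion reads $\sum_{A',B'}\epsilon_{A',B'}f(x_{A',B'})\geq f(a)+(1-\beta+f(\beta))\,c'\,a$, where $c'=\sum_{(A_i,B_j)\in I}\epsilon_{A_i,B_j}=\frac{1}{|A||B|}\sum_{(A_i,B_j)\in I}|A_i||B_j|\geq c$ by the definition of an $(\alpha,c,t)$-shattering. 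Since $1-\beta+f(\beta)=\tfrac9{10}-\tfrac1{10}\log 10>\tfrac12$ (using $\log 10<4$), and $c'\geq c\geq 0$, $a\geq 0$, we get $\sum_{A',B'}\epsilon_{A',B'}f(x_{A',B'})\geq f(a)+\tfrac c2 a$. Multiplying by $|A||B|/|V|^2$ and using $\frac{|A||B|}{|V|^2}a=\frac{|A||B|}{|V|^2}d(A,B)=\frac{e(A,B)}{|V|^2}$ yields exactly $f(\mathcal{A},\mathcal{B})\geq f(A,B)+\tfrac c2\frac{e(A,B)}{|V|^2}$.

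The only step I expect to need genuine care is the last one: verifying that the defect constant $1-\beta+f(\beta)$ produced by Lemma \ref{secondefect} with $\beta=1/10$ is at least $1/2$ — this is precisely why the shattering threshold $\alpha$ must be at most a tenth of the average density, i.e. why the hypothesis $d(A,B)\geq 10\alpha$ is imposed — and making sure the inequality $c'\geq c$ is used in the direction that only strengthens the defect term, since putting more than a $c$-fraction of the weight on low-density pairs can only help. Parts 1--3 are essentially bookkeeping once the weighted-average reformulation is in place, and I do not anticipate any obstacle there.
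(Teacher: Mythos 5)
Your proposal is correct and follows essentially the same route as the paper: recast $f(\mathcal{A},\mathcal{B})$ as $\frac{|A||B|}{|V|^2}$ times a weighted average of $f$-values with weights $|A'||B'|/(|A||B|)$, apply Jensen for parts 1--3, and invoke Lemma~\ref{secondefect} with $\beta=1/10$ and $I$ the set of low-density pairs for part 4, noting $c'\geq c$ from the shattering definition. The paper leaves the numerical check $1-\tfrac1{10}+\tfrac1{10}\log\tfrac1{10}>\tfrac12$ implicit, which you spell out; otherwise the two proofs coincide.
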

\begin{proof}
We have \begin{eqnarray*} f(\mathcal{A},\mathcal{B}) & = & \sum_{A' \in \mathcal{A},B' \in \mathcal{B}}f(A',B')=
\sum_{A' \in \mathcal{A},B' \in \mathcal{B}}\frac{|A'||B'|}{|V|^2}f(d(A',B')) \\ & = &
\frac{|A||B|}{|V|^2} \sum_{A' \in \mathcal{A},B' \in \mathcal{B}}\frac{|A'||B'|}{|A||B|}f(d(A',B')) \geq
\frac{|A||B|}{|V|^2} f(d(A,B)) = f(A,B).\end{eqnarray*}
where we used $\sum_{A' \in \mathcal{A},B' \in \mathcal{B}} \frac{|A'||B'|}{|A||B|}=1$ and Jensen's inequality. This establishes part 1. For part 2, note that if $\mathcal{P}$ is a partition of $V$, then by part 1 we have $$f(\mathcal{P})=f(\mathcal{P},\mathcal{P}) \geq f(V,V)=f(d).$$
Part 3 is an immediate corollary of part 1. For part 4, we use Lemma \ref{secondefect} such that for each $A' \in \mathcal{A}$ and $B' \in \mathcal{B}$,
there is an $i$ corresponding to the pair $(A',B')$ with $\epsilon_i=\frac{|A'||B'|}{|A||B|}$ and $x_i=d(A',B')$, and we let
$a=\sum_{i} \epsilon_ix_i=d(A,B)$, $\beta=1/10$, and $I$ be the set of $i$ such that $x_i \leq \alpha \leq \beta a$.
Since $\mathcal{A}$ is a partition of $A$ and $\mathcal{B}$ is a partition of $B$, the sum of all $\epsilon_i$ is $1$.
By the definition of an $(\alpha,c,t)$-shattering, we have $\sum_{i \in I} \epsilon_i \geq c$.
We conclude that
$$f(\mathcal{A},\mathcal{B})=\frac{|A||B|}{|V|^2}\sum_i \epsilon_if(x_i) \geq \frac{|A||B|}{|V|^2}\left(f(a)+ca(1-\beta+f(\beta))\right)  \geq f(A,B)+\frac{c}{2}\frac{e(A,B)}{|V|^2}.$$
\end{proof}

The next lemma shows how to refine a partition into not too many parts so that almost all vertices are in parts of the same size, and the remaining vertices are in parts of
smaller size.

\begin{lemma}\label{sillylemma}
Suppose $\mathcal{Q}$ is a partition of a set $V$ of size $n$ into at most $k$ parts and $\upsilon>0$. Then there is a refinement $\mathcal{Q}'$ of $\mathcal{Q}$
into at most $(2\upsilon^{-1}+1)k$ parts and a number $r$ such that all parts have size at most $r$, and all but at most $\upsilon n$ vertices are in parts of
size $r$.
\end{lemma}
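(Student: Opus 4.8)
The plan is to fix a single target block size $r$, chop every part of $\mathcal{Q}$ into blocks of size exactly $r$ together with one leftover block of size less than $r$, and then verify that $r$ can be chosen so that the resulting refinement has few parts and the leftover blocks contain few vertices.

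First I would set $r=\max\{1,\lfloor \upsilon n/k\rfloor\}$; the maximum with $1$ only matters in the degenerate range $n<k/\upsilon$, where the refinement will simply be the partition into singletons. For each part $Q\in\mathcal{Q}$, partition $Q$ into $\lfloor |Q|/r\rfloor$ blocks of size exactly $r$ and, when $r$ does not divide $|Q|$, one additional block of size $|Q|-r\lfloor |Q|/r\rfloor\in\{1,\ldots,r-1\}$; let $\mathcal{Q}'$ be the refinement obtained this way. By construction $\mathcal{Q}'$ refines $\mathcal{Q}$, every part has size at most $r$, and the only parts of size strictly less than $r$ are the leftover blocks: at most one per part of $\mathcal{Q}$, hence at most $k$ of them, each of size at most $r-1$. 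Therefore the number of vertices lying in parts of size $<r$ is at most $k(r-1)$, which is $0$ when $r=1$ and is $<kr\le\upsilon n$ when $r=\lfloor \upsilon n/k\rfloor\ge 2$ (since then $r\le \upsilon n/k$); in every case it is at most $\upsilon n$, which is the second conclusion.

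For the bound on the number of parts, observe $|\mathcal{Q}'|=\sum_{Q\in\mathcal{Q}}\lceil |Q|/r\rceil\le\sum_{Q\in\mathcal{Q}}(|Q|/r+1)\le n/r+k$, so it suffices to check $r\ge \upsilon n/(2k)$, for then $n/r\le 2\upsilon^{-1}k$ and $|\mathcal{Q}'|\le(2\upsilon^{-1}+1)k$. If $\upsilon n/k<2$ then $r\ge 1\ge \upsilon n/(2k)$; if $\upsilon n/k\ge 2$ then $r=\lfloor \upsilon n/k\rfloor\ge \upsilon n/k-1\ge \upsilon n/(2k)$. This finishes the proof. There is no genuine obstacle: the only point requiring a moment's care is the small-$n$ regime $\lfloor \upsilon n/k\rfloor=0$, handled by the $\max$ with $1$, since there $\mathcal{Q}'$ is the partition into singletons and $n<k/\upsilon\le(2\upsilon^{-1}+1)k$.
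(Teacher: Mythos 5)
Your proof is correct and takes essentially the same approach as the paper: pick the target block size $r=\lfloor \upsilon n/k\rfloor$ (with a fallback to singletons in the degenerate range), chop every part into blocks of size $r$ plus one leftover, and bound both the part count by $n/r+k$ and the leftover vertices by $kr$. The only cosmetic difference is that you fold the paper's two-case split ($k>\upsilon n$ versus $k\le\upsilon n$) into a single $\max\{1,\cdot\}$ definition of $r$, and you spell out the verification of $r\ge \upsilon n/(2k)$ a bit more carefully.
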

\begin{proof}
If $k>\upsilon n$, then let $r=1$ and $\mathcal{Q}'$ be the partition of $V$ into parts of size $1$. Otherwise,
let $r=\lfloor \upsilon n/k \rfloor$. Refine each part in $\mathcal{Q}$ into parts of size $r$,
with possibly one remaining part of size less than $r$. The number of parts is at most $n/r+k \leq (2\upsilon^{-1}+1)k$.
The number of vertices in parts of size less than $r$ is at most $kr \leq \upsilon n$.
\end{proof}

The next lemma allows us to refine a vertex partition of a graph with many edge-disjoint copies of $H$ but with relatively few (total) copies of $H$ so that the
mean entropy density increases significantly, while the number of parts is roughly one exponential larger.

\begin{lemma}\label{almostlast}
Let $H$ be a graph on $h$ vertices. Suppose $G=(V,E)$ is a graph on $n$ vertices, whose edge set can be partitioned into $\epsilon_0 n^2$ copies of $H$.
Let  $n_0 \leq \frac{\epsilon_0}{4}n$ be a positive integer and $\mathcal{P}$ be a partition of $V$
into at most $T$ parts with all parts of size at most $n_0$, and all but at most $\frac{\epsilon_0}{8} n$ vertices in parts of size $n_0$.
Suppose further that $G$ has at most $2^{-(40/\epsilon_0)^{h^2}}T^{-h}n^h$ copies of $H$. Let $f(x)=x\log x$ for $x>0$ and $f(0)=0$.
Then there is a refinement $\mathcal{P}'$ of $\mathcal{P}$ with at most $s^T$ parts with $s=2^{2^{\left( 50/\epsilon_0 \right)^{h^2}}}$,
such that $f(\mathcal{P}') \geq f(\mathcal{P})+\frac{\epsilon_0}{4h^2}$ and all but at most $\frac{\epsilon_0}{8} n$ vertices are in parts of equal size,
and all other parts are of smaller size.
\end{lemma}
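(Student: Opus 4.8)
The plan is to cash in the $\epsilon_0n^2$ edge‑disjoint copies of $H$ that partition $E(G)$: each such copy should pay for an entropy gain supplied by the key lemma, Lemma~\ref{keylemma}, applied to the parts of $\mathcal{P}$ housing its vertices. Fix $\alpha=\epsilon_0/19$ (so $0<\alpha<1/4$), put $d_h=2^{-(2/\alpha)^{h^2}}$, and assume $H$ has no isolated vertex. To each of the edge‑disjoint copies $\phi$ attach its \emph{pattern}, the $h$-tuple $\bigl(P(\phi(1)),\dots,P(\phi(h))\bigr)$ of parts of $\mathcal{P}$ meeting it (entries may coincide); call a pattern \emph{rich} if more than $d_hn_0^h$ copies of $H$ have that pattern.

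First I would discard the copies meeting a part of size $<n_0$ and the copies lying in a rich pattern. For the first kind, each uses an edge at a vertex outside the size‑$n_0$ parts; there are at most $\tfrac{\epsilon_0}{8}n\cdot n$ such edges and distinct copies are edge‑disjoint, so at most $\tfrac{\epsilon_0}{8}n^2$ copies are lost. For the second, $G$ has at most $2^{-(40/\epsilon_0)^{h^2}}T^{-h}n^h$ copies of $H$ in all, so there are at most $2^{-(40/\epsilon_0)^{h^2}}T^{-h}n^h/(d_hn_0^h)$ rich patterns, and any pattern contains at most $n_0^2$ edge‑disjoint copies (they use distinct $G$-edges across one fixed coordinate pair); using $n_0\ge n/(2T)$ and the choice of $\alpha$ (the exponents $38$ versus $40$ leave enormous slack), this costs at most $\tfrac{\epsilon_0}{16}n^2$. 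Hence at least $\tfrac34\epsilon_0n^2$ copies survive, each with all vertices in size‑$n_0$ parts and with a non‑rich pattern.

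I would then apply Lemma~\ref{keylemma} to each survivor's pattern, viewed as an $h$-tuple of size‑$n_0$ vertex subsets (the disjointness assumed there is not actually used in its proof, nor in the hypergraph lemmas it invokes, so repeated pattern entries are harmless; alternatively one first removes the copies with a repeated entry). This produces an edge $(i,j)$ of $H$ and an $(\alpha,h^{-2},2^{d_h^{-1}})$-shattering of the pair $(P_{\tau(i)},P_{\tau(j)})$. Since the copy contains the $G$-edge $\phi(i)\phi(j)$ between those two parts and the copies are edge‑disjoint, at most $e(P,Q)$ survivors get assigned to a given pair $\{P,Q\}$; so, with $\mathcal{S}$ the set of pairs obtained, $\sum_{(P,Q)\ \mathrm{ordered},\ \{P,Q\}\in\mathcal{S}}e(P,Q)$ is at least twice the number of survivors, i.e.\ at least $\tfrac32\epsilon_0n^2$, and deleting from $\mathcal{S}$ the pairs with $d(P,Q)<10\alpha$ — which together carry fewer than $10\alpha n^2$ edges — still leaves pairs carrying more than $\tfrac{\epsilon_0}{2}n^2$ edges.

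Finally I would take $\mathcal{P}'$, the common refinement of $\mathcal{P}$ with all these shatterings. Since each shattering refines $\mathcal{P}$ inside a single pair of parts, parts 1 and 4 of Lemma~\ref{usefullem} give, for each kept pair $\{P,Q\}$ with $d(P,Q)\ge10\alpha$, a contribution $\ge\tfrac{h^{-2}}{2}e(P,Q)/n^2$ to $f(\mathcal{P}')-f(\mathcal{P})$ in each order, while all other ordered pairs contribute $\ge0$; summing, $f(\mathcal{P}')\ge f(\mathcal{P})+\tfrac{h^{-2}}{2}\cdot\tfrac{\epsilon_0}{2}=f(\mathcal{P})+\tfrac{\epsilon_0}{4h^2}$. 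Each part of $\mathcal{P}$ is cut by at most one shattering per other part, i.e.\ into the common refinement of at most $T+1$ partitions of $\le2^{d_h^{-1}}$ cells, so $|\mathcal{P}'|\le T\cdot2^{d_h^{-1}(T+1)}$; applying Lemma~\ref{sillylemma} with $\upsilon=\epsilon_0/8$ to $\mathcal{P}'$ produces the required $\mathcal{P}''$, with $f$ not decreasing (part 3 of Lemma~\ref{usefullem}), with all but $\tfrac{\epsilon_0}{8}n$ vertices in parts of a single size and the rest in smaller parts, and with at most $s^T$ parts for $s=2^{2^{(50/\epsilon_0)^{h^2}}}$ — the slack between the exponents $38$, $40$ and $50$ being exactly what the choice $\alpha=\epsilon_0/19$ buys. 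I expect the main obstacle to be this middle accounting: keeping the two classes of discarded copies small \emph{relative to} $\epsilon_0n^2$, and confirming that the surviving copies force shatterings of pairs of parts of density $\ge10\alpha$ whose gains genuinely sum to $\Omega(\epsilon_0/h^2)$, all while the parameter choices keep the part count below $s^T$.
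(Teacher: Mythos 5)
Your proof is correct and, at heart, identical to the paper's: for each surviving edge-disjoint copy of $H$, Lemma \ref{keylemma} applied to the $h$-tuple of parts housing its vertices produces a shattered pair of parts carrying an edge of the copy; edge-disjointness converts the count of surviving copies into a count of $G$-edges across shattered high-density pairs; part 4 of Lemma \ref{usefullem} converts that into the entropy gain $\epsilon_0/(4h^2)$; and Lemma \ref{sillylemma} normalizes the part sizes to yield $\mathcal{P}'$ with at most $s^T$ parts.

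That said, the \emph{rich-pattern} detour is unnecessary. Since the parts of $\mathcal{P}$ have size at most $n_0$ and there are at most $T$ of them, $n_0 \geq n/T$, so the hypothesis that $G$ has at most $2^{-(40/\epsilon_0)^{h^2}}T^{-h}n^h$ copies of $H$ already gives at most $d_h n_0^h$ copies \emph{in total} (with the paper's $\alpha=\epsilon_0/20$; your $\alpha=\epsilon_0/19$ leaves even more slack). Hence every pattern is automatically below the threshold needed for Lemma \ref{keylemma}, and no copies are lost to richness; the accounting you carry out just certifies a vacuous case. The paper uses this global bound directly and instead spends its budget discarding edges inside parts, edges touching small parts, and edges across low-density pairs, arriving at the same $\geq \epsilon_0 n^2/2$ surviving edge-disjoint copies and the same density threshold $10\alpha$.

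You are right to flag the disjointness hypothesis of Lemma \ref{keylemma}. Discarding edges inside parts prevents a surviving copy from placing two \emph{adjacent} vertices of $H$ in the same part, but two \emph{nonadjacent} vertices of $H$ can still land in the same part, so a pattern can have a repeated entry. The paper asserts that each copy of $H$ in $G'$ has its vertices in different parts, which is only immediate when $H$ is complete; for general $H$ this needs justification. Your parenthetical --- that the disjointness isn't actually used in the proof of Lemma \ref{keylemma} or the hypergraph lemmas it invokes --- is the right way out, but you should verify it rather than assume it (the hypergraph $\Gamma$ there is built to be $(h-1)$-partite, which presupposes distinct parts); the alternative you mention, discarding copies with a repeated entry, requires a separate bound that you don't supply. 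Either route should be carried through for the argument to be airtight, and the same remark applies to the paper's own write-up.
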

\begin{proof}
We refine the partition $\mathcal{P}$ as follows. Let $\alpha=\epsilon_0/20$, $c=h^{-2}$, and $t=2^{2^{(2/\alpha)^{h^2}}}$.
For every pair $P_i,P_j \in \mathcal{P}$ of distinct parts each of size $n_0$ for which there is an $(\alpha,c,t)$-shattering of
$(P_i,P_j)$, let $\mathcal{P}_{ij}$ denote the partition of $P_i$ and $\mathcal{P}_{ji}$ denote the partition of $P_j$
in an $(\alpha,c,t)$-shattering of the pair $(P_i,P_j)$. For each $i$, let $\mathcal{P}_i$ be the partition of $P_i$ which is the
common refinement of all partitions $\mathcal{P}_{ij}$, so $\mathcal{P}_i$ has at most $t^T$ parts. Let $\mathcal{Q}$ be the partition of $V$
consisting of all parts of the partitions $\mathcal{P}_i$. As each of the at most $T$ parts of $\mathcal{P}$ is refined into at most $t^T$ parts,
the number of parts of $\mathcal{Q}$ is at most $Tt^T$.

Let $G'$ be the subgraph of $G$ obtained by deleting edges which are inside parts of $\mathcal{P}$, contain a vertex in a part of $\mathcal{P}$ of size
not equal to $n_0$, or go between parts of $\mathcal{P}$ with density less than $\epsilon_0/2$. The number of edges inside parts is at most $nn_0/2 \leq \epsilon_0 n^2/8$. As all but at most $\frac{\epsilon_0}{8}n$ vertices are
in parts of size $n_0$, the number of edges containing a vertex in a part of size not equal to $n_0$ is at most $\frac{\epsilon_0}{8}n^2$.
The number of edges between parts of density less than $\epsilon_0/2$ is at most $(\epsilon_0/2)n^2/2 \leq \epsilon_0 n^2/4$.
So the number of edges of $G$ deleted to obtain $G'$ is at most $\epsilon_0 n^2/8+\epsilon_0 n^2/8+\epsilon_0 n^2/4=\epsilon_0 n^2/2$.
Hence, $G'$ contains at least $\epsilon_0n^2-\epsilon_0 n^2/2=\epsilon_0 n^2/2$ edge-disjoint copies of $H$. Each copy of $H$ in $G'$ has its
vertices in different parts each of size $n_0$, and its edges go between parts with density at least $\epsilon_0/2$. As every part of $\mathcal{P}$ has size at most $n_0$
and there are $T$ parts, $n_0 \geq n/T$. Note that the number of copies of $H$ in $G$ is at most $2^{-(40/\epsilon_0)^{h^2}}T^{-h}n^h=d_h(n/T)^h \leq d_hn_0^h$.
For each copy of $H$ in $G'$, by Lemma \ref{keylemma}, at least one of its edges goes between parts which are $(\alpha,c,t)$-shattered.
Hence, the number of edges of $G$ which are between parts of size $n_0$ with density at least $\frac{\epsilon_0}{2}=10\alpha$
between them and which are $(\alpha,c,t)$-shattered is at least the number of edge-disjoint copies of $H$ in $G'$, which is at least $\epsilon_0 n^2/2$.

By Lemma \ref{usefullem}, parts 1 and 4, we have
$$f(\mathcal{Q}) \geq f(\mathcal{P})+\sum_{(P_i,P_j)} \frac{c}{2}\frac{e(P_i,P_j)}{n^2} \geq f(\mathcal{P})+\frac{c}{2}\frac{\epsilon_0 n^2/2}{n^2}
\geq f(\mathcal{P})+c\epsilon_0/4 = f(\mathcal{P})+\frac{\epsilon_0}{4h^2},$$ where the sum is over all pairs $(P_i,P_j)$ of parts of $\mathcal{P}$ of size $n_0$
with $i<j$ and density at least $\frac{\epsilon_0}{2}=10\alpha$ between them that are $(\alpha,c,t)$-shattered.

By Lemma \ref{sillylemma} with $\upsilon=\frac{\epsilon_0}{8}$, there is a refinement $\mathcal{P}'$ of $\mathcal{Q}$ into at most
$$(2\upsilon^{-1}+1)|\mathcal{Q}| \leq (16\epsilon_0^{-1}+1)Tt^T \leq 17\epsilon_0^{-1}Tt^T \leq s^T$$ parts, such that
all but at most $\frac{\epsilon_0}{8} n$ vertices are in parts of equal size, and all other parts are of smaller size.
By Lemma \ref{usefullem}, part 3, we have $f(\mathcal{P}') \geq f(\mathcal{Q}) \geq f(\mathcal{P})+\frac{\epsilon_0}{4h^2}$, which completes the proof.
\end{proof}

We now have the necessary lemmas for the proof of Theorem \ref{main}.

\vspace{0.1cm}

\noindent {\bf Proof of Theorem \ref{main}:}
Suppose for contradiction that there is a graph $G$ on $n$ vertices with at most $\delta n^h$ copies of $H$ and
for which $\epsilon n^2$ edges need to be removed from $G$ to make it $H$-free. Let $G'$ be a subgraph of $G$ which consists of the union of a maximum
collection of edge-disjoint copies of $H$ in $G$. As the removal of the edges of $G'$ from $G$ leaves an $H$-free subgraph of $G$, the graph $G'$ has at least
$\epsilon n^2$ edges. Let $\epsilon_0 n^2$ denote the number of edge-disjoint copies of $H$ in $G'$, so $e(G')=e(H)\epsilon_0n^2$.

As there is at least one and at most $\delta n^h$ copies of $H$, we have $n \geq \delta^{-1/h}$.
Let $\mathcal{P}_0$ be an arbitrary partition $V=V_1 \cup \ldots \cup V_k$ of the vertex set of $G'$ into parts of size $n_0=\lceil \frac{\epsilon_0}{8}n \rceil$,
except possibly one remaining set of size less than $\frac{\epsilon_0}{8}n$. The number $p_0$ of parts of $\mathcal{P}_0$ is at most
$8\epsilon_0^{-1}+1 \leq 5h^2\epsilon^{-1}$. By Lemma \ref{usefullem}, part 2, we have $f(\mathcal{P}_0) \geq f(d) =d\log d$,
where $d=2e(G')/n^2 \geq 2\epsilon$. We repeatedly apply Lemma \ref{almostlast} to obtain a sequence
of partition refinements $\mathcal{P}_0,\mathcal{P}_1,\ldots$, and we let $p_i$ denote the number of parts of $\mathcal{P}_i$. Once we have the partition
$\mathcal{P}_i$, as long as $\delta \leq 2^{-(40/\epsilon_0)^{h^2}}p_i^{-h}$, we can apply Lemma \ref{almostlast} to obtain a refinement $\mathcal{P}_{i+1}$ of
$\mathcal{P}_i$. After $i$ iterations, $f(\mathcal{P}_i) \geq f(\mathcal{P}_0)+i\frac{\epsilon_0}{4h^2}$ and $p_i \leq s^{p_{i-1}}$,
where $s=2^{2^{\left( 50/\epsilon_0 \right)^{h^2}}}$. Roughly, at each iteration the number of parts is one exponential larger than in the previous iteration.
As $\delta^{-1}$ is a tower of twos of height $5h^4\log \epsilon^{-1}$, this process continues for at least $i_0:=\lceil 4h^4\log \epsilon^{-1} \rceil$ iterations.
Also using the inequalities $h^2\epsilon_0 > 2e(H)\epsilon_0 = d$ and $d \geq 2\epsilon$, we have
\begin{eqnarray*}
f(\mathcal{P}_{i_0}) & \geq & f(\mathcal{P}_0)+i_0\frac{\epsilon_0}{4h^2} \geq d\log d +\left(4h^4\log \epsilon^{-1}\right)\frac{\epsilon_0}{4h^2}
  =  d\log d + h^2\epsilon_0 \log \epsilon^{-1} > d\log (d/\epsilon) > 0,\end{eqnarray*}
which contradicts that $f$ applied to any partition is nonpositive. \qed

\section{Concluding remarks}

We gave a new proof of the graph removal lemma with an improved bound. Below we discuss improved bounds for several variants of the graph removal lemma
and finish with some open problems.

\vspace{0.1cm}
\noindent {\bf Removing homomorphisms.} There is a seemingly stronger variant of the graph removal lemma mentioned in \cite{EFR} which we refer to
as the homomorphism removal lemma. It states that for every graph $H$ on $h$ vertices and every $\epsilon>0$, there is $\delta>0$
such that if $G$ is a graph on $n$ vertices with at most $\delta n^h$ copies of $H$, then $\epsilon n^2$ edges of $G$ can be removed to obtain a graph $G'$
for which there is no homomorphism from $H$ to $G'$. It is rather straightforward to obtain this result from Szemer\'edi's regularity lemma.
However, one can further show that the $\delta$ in the homomorphism removal lemma is closely related to the $\delta$ in the graph removal lemma,
and thus Theorem \ref{main} implies a similar improved bound in the homomorphism removal lemma. The proof of this fact is quite simple, so we only
sketch it below.

Suppose $G$ is a graph on $n$ vertices which has at most $\delta n^h$ copies of $H$. A {\it homomorphic image} of a graph $H$ is a graph $F$ for which there is a surjective homomorphism
from $H$ to $F$. As each homomorphic image of $H$ has at most $|H|$ vertices, the number of homomorphic images of $H$ is finite. Notice that to remove all homomorphisms from $H$ to $G$, it suffices to remove all copies of homomorphic images of $H$ in $G$.
If there are few copies in $G$ of each homomorphic image of $H$, then by the graph removal lemma we can remove few edges and remove all homomorphisms from $H$ to $G$.
So there must be a homomorphic image $F$ of $H$ for which there are many copies of $F$ in $H$, say $cn^k$ with $c>\delta^{h^{-h}}$,
where $k$ is the number of vertices of $F$.  Let $f$ be a surjective homomorphism from $H$ to $F$, and for each vertex $i$ of $F$,
let $a_i$ denote the number of vertices of $H$ which map to vertex $i$ in $f$. The {\it blow-up} $F(a_1,\ldots,a_k)$ of $F$ is the graph obtained from $F$
by replacing each vertex $i$ by an independent set $I_i$ of order $a_i$, and a pair of vertices in different parts $I_i$ and $I_j$ are adjacent if and only if
$i$ and $j$ are adjacent in $F$. Note that $H$ is a subgraph of the blow-up $F(a_1,\ldots,a_k)$.
Let $\mathcal{S}$ denote the set of sequences $(v_1,\ldots,v_k)$ of $k$ vertices of $G$
which form a copy of $F$ with $v_i$ the copy of vertex $i$. If $A_1,\ldots,A_k$ are vertex subsets of $G$ with $|A_i|=a_i$ and
all $k$-tuples in $A_1 \times \cdots \times A_k$ belong to $\mathcal{S}$, then these vertex subsets form a copy of $F(a_1,\ldots,a_k)$ in $G$, and hence also
make a copy of $H$ in $G$. As $G$ has $cn^k$ copies of $F$, a simple convexity argument as in \cite{ES83} shows that if $c \gg n^{-1/(a_1\cdots a_k)}$, then
$\mathcal{S}$ contains at least $(1-o(1))c^{a_1\cdots a_k}n^{a_1+\cdots+a_k}=(1-o(1))c^{a_1\cdots a_k}n^{h}$ $k$-tuples of disjoint vertex subsets $(A_1,\ldots,A_k)$ with $|A_i|=a_i$ and
$A_1 \times \cdots \times A_k \subset \mathcal{S}$. Thus, $G$ contains at least
$$(1-o(1))c^{a_1\cdots a_k}n^{h} \geq (1-o(1)\delta^{(3^{1/3}/h)^{h}}n^h  \geq h! \delta n^h$$ labeled copies of $H$, where we use
$a_1 \cdots a_k \leq 3^{h/3}$ as $a_1,\ldots,a_k$ are positive integers which sum to $h$, and $c>\delta^{h^{-h}}$. This contradicts $G$
has at most $\delta n^h$ copies of $H$.

\vspace{0.1cm}
\noindent {\bf Directed, colored, and arithmetic removal lemmas.} The directed graph removal lemma, proved by Alon and Shapira \cite{AlSh}, states that for each directed graph $H$ on $h$ vertices and $\epsilon>0$ there is
$\delta=\delta(\epsilon,H)>0$ such that every directed graph $G=(V,E)$ on $n$ vertices with at most $\delta n^h$ copies of $H$ can be made $H$-free by removing
at most $\epsilon n^2$ edges. The proof of Theorem \ref{main} can be slightly modified to obtain a similar bound as in Theorem \ref{main}
for the directed graph removal lemma. The proof begins by finding a subgraph $G'$ of $G$ which is the disjoint union of $\epsilon' n^2$ copies of $H$,
with $\epsilon' \geq 2h^{-2}\epsilon$. There is a partition $V=V_1 \cup \ldots \cup V_h$ with at least $h^{-h}\epsilon' n^2$
edge-disjoint copies of $H$ with the copy of vertex $i$ in $V_i$. Indeed, in a uniform random partition into $h$ parts, each copy of $H$ has probability $h^{-h}$
that its copy of vertex $i$ lies in $V_i$ for all $i \in [h]$. We then let $G''$ be the subgraph of $G'$ which consists of the union of
these at least $2h^{-h-2}\epsilon n^2$ edge-disjoint copies of $H$. The proof of the directed graph removal lemma is then essentially
the same as the proof of Theorem \ref{main}, except we start with the partition $V=V_1 \cup \ldots \cup V_h$ and refine it further at each step.

There is also a colored graph removal lemma. For each $\epsilon>0$ and positive integer $h$, there is $\delta=\delta(\epsilon,H)>0$ such that if
 $\phi:E(H) \rightarrow [k]$ is a $k$-edge-coloring of the edges of a graph $H$ on $h$ vertices, and $\psi:E(G) \rightarrow [k]$ is a $k$-edge-coloring
of the edges of a graph $G$ on $n$ vertices such that the number of copies of $H$ with coloring $\phi$ in the coloring $\psi$ of $G$ is at most $\delta n^h$,
then one can remove all copies of $H$ with coloring $\phi$ by deleting at most $\epsilon n^2$ edges of $G$. We can also obtain a similarly improved
bound on the colored graph removal lemma, and the proof is identical to the proof of the directed graph removal lemma.

Green \cite{Gr} developed an arithmetic regularity lemma and used it to deduce an arithmetic removal lemma. It states that for each
$\epsilon>0$ and integer $m \geq 3$ there is $\delta>0$ such that if $G$ is an abelian group of order $N$, and $A_1,\ldots,A_m$ are subsets of $G$ such that there are
at most $\delta N^{m-1}$ solutions to the equation $a_1+\cdots+a_m=0$ with $a_i \in A_i$ for all $i$, then it is possible to remove at most
$\epsilon N$ elements from each set $A_i$ so as to obtain sets $A_i'$ for which there are no solutions to $a_1'+\cdots+a_m'=0$ with $a_i' \in A_i'$ for all $i$.
Like Szemer\'edi's regularity lemma, the bound on $\delta^{-1}$ grows as a tower of twos of height polynomial in $\epsilon^{-1}$. Green's proof of the
arithmetic regularity lemma relies on techniques from Fourier analysis and does not extend to nonabelian groups. Kr\'al, Serra, and Vena \cite{KSV} found a new proof of Green's removal lemma using the directed graph
removal lemma which extends to all groups. They proved that for each integer $m \geq 3$ and $\epsilon > 0$ there is $\delta>0$ such that the following holds.
Let $G$ be a group of order $N$, $A_1,\ldots,A_m$ be sets of elements of $G$, and $g$ be an arbitrary element of $G$. If the equation $x_1x_2\cdots x_m=g$
has at most $\delta N^{m-1}$ solutions with $x_i \in A_i$ for all $i$, then there are subsets $A_i' \subset A_i$ with $|A_i \setminus A_i'| \leq \epsilon N$
such that there is no solution to $x_1x_2\cdots x_m=g$ with $x_i \in A_i'$ for all $i$. Their proof relies on the removal lemma for directed cycles, and we thus
obtain a new bound for this removal lemma as well.

\vspace{0.1cm}
\noindent {\bf Further directions.} Alon \cite{Al} showed that the largest possible $\delta(\epsilon,H)$ in the graph removal lemma has a polynomial dependency on $\epsilon$ if and only if
$H$ is bipartite. For nonbipartite $H$, he showed that there is $c=c(H)>0$ such that $\delta(\epsilon,H)<(\epsilon/c)^{c\log (c/\epsilon)}$. Note that this
upper bound is far from the lower bound provided by Theorem \ref{main}, and it would be extremely interesting to close the gap. Similarly, Alon and Shapira \cite{AlSh} determined
for which directed graphs $H$ the function $\delta(\epsilon,H)$ in the directed graph removal lemma has a polynomial dependency on $\epsilon$. It is precisely when the
core of $H$, which is the smallest subgraph $K$ of $H$ for which there is a homomorphism from $H$ to $K$, is an oriented tree or a directed cycle of length $2$.
A similar bound also holds for Green's removal lemma. All of the superpolynomial lower bounds are based on variants of Behrend's construction \cite{Be} giving a large subset
of the first $n$ positive integers without a three-term arithmetic progression.

A great deal of research has gone into proving a hypergraph analogue of the removal lemma \cite{Go4}, \cite{Go5}, \cite{NaRoSc}, \cite{RoSk}, \cite{Ta},
leading to new proofs of Szemer\'edi's theorem and some of its extensions. Using a colored version of the hypergraph removal lemma, Shapira \cite{Sha} and
independently Kr\'al, Serra, and Vena \cite{KSV1} proved a conjecture of Green establishing a removal lemma for systems of linear equations. It would be
interesting to find new proofs of these results without using any version of the regularity lemma.

\medskip
\vspace{0.2cm}
\noindent {\bf Acknowledgement.}  I would like to thank Noga Alon, J\'anos Pach, and Benny Sudakov for helpful discussions.
I would particularly like to thank David Conlon for reading this paper carefully and providing many helpful comments.

\end{document}